\newtheorem{theorem}{Theorem}[section]
\newtheorem{lemma}[theorem]{Lemma}
\newtheorem{corollary}[theorem]{Corollary}
\newtheorem{proposition}[theorem]{Proposition}
\newtheorem{observation}[theorem]{Observation}
\newtheorem{claim}{Claim}
\newenvironment{subproof}[1][\proofname]{
	
	\begin{proof}[#1]}{\end{proof}
}
\def\eqref#1{\textcolor{red!60!black}{(\ref{#1})}}
\newcommand\abs[1]{\lvert #1\rvert}
\begin{document}
\date{\today}
\title{Spectral integral variation of signed graphs}
\author[1]{Jungho Ahn\thanks{Supported by the KIAS Individual Grant (CG095301) at Korea Institute for Advanced Study.}}
\author[1]{Cheolwon Heo\thanks{Supported by the KIAS Individual Grant (CG092601) at Korea Institute for Advanced Study.}}
\author[1]{Sunyo Moon\thanks{Supported by the KIAS Individual Grant (CG092301) at Korea Institute for Advanced Study.}}
\affil[1]{Korea Institute for Advanced Study (KIAS), Seoul, South~Korea}
\affil[ ]{\small\textit{Email addresses:}
    \texttt{junghoahn@kias.re.kr},
    \texttt{cwheo@kias.re.kr},
    \texttt{symoon@kias.re.kr}
}

\maketitle

\begin{abstract}
    We characterize when the spectral variation of the signed Laplacian matrices is integral after a new edge is added to a signed graph.
    As an application, for every fixed signed complete graph, we fully characterize the class of signed graphs to which one can recursively add new edges keeping spectral integral variation to make the signed complete graph.
\end{abstract}

\section{Introduction}

In this paper, every graph is simple and finite.
For a positive integer~$n$, we let $[n]:=\{1,\ldots,n\}$ and denote by~$K_n$ the complete graph on $n$ vertices.
The \emph{Laplacian matrix} of $G$ is $L(G):=D(G)-A(G)$ where $D(G)$ is the diagonal matrix of vertex degrees of~$G$ and $A(G)$ is the adjacency matrix of~$G$.
The \emph{signless Laplacian matrix} of~$G$ is $Q(G):=D(G)+A(G)$.
A graph~$G$ is \emph{Laplacian integral} (resp. \emph{$Q$-integral}) if every eigenvalue of~$L(G)$ (resp.~$Q(G)$) is an integer.
The Laplacian integrality and $Q$-integrality of a graph have been widely investigated.
We refer to~\cite{mohar91,grone94,merris94,fallat05,merris98,balinska02,lima07,grone08,hammer96,kirkland08,kirkland10,merris97,del-vecchio18,novanta21L,goldberger13,huang15,liu10} for results of Laplacian integrality and to~\cite{stanic07,simic08,stanic09,simic10,pokorny13,park19,novanta21Q,zhang17,zhao13,freitas09,freitas10} for results of $Q$-integrality.

The (signless) Laplacian matrix of a graph can be generalized to a signed graph.
Harary~\cite{harary53} first introduced signed graphs.
A \emph{signed graph} is a pair $(G,\Sigma)$ of a graph $G$ and a subset $\Sigma$ of $E(G)$.
We call~$G$ the \emph{underlying graph} of $(G,\Sigma)$ and $\Sigma$ the \emph{sign} of $(G,\Sigma)$.
The edges in $\Sigma$ are called \emph{odd} edges, and the other edges of $G$ are called \emph{even} edges.
For $n:=\abs{V(G)}$, the \emph{adjacency matrix} of $(G,\Sigma)$ is an $n\times n$ matrix, denoted by $A(G,\Sigma)$, such that for $i,j\in[n]$, its $(i,j)$-entry is $1$ if the $i$-th vertex and the $j$-th vertex are joined by an even edge, $-1$ if they are joined by an odd edge, and $0$ otherwise.
For a vertex~$v$ of~$G$, we denote by $N_G(v)$ the neighborhood of~$v$ in~$G$.
The \emph{odd neighborhood} (resp. \emph{even neighborhood}) of~$v$ in~$(G,\Sigma)$, denoted by $N_{G,\Sigma}^-(v)$ (resp. $N_{G,\Sigma}^+(v)$), is the set of neighbors of~$v$ joined by an odd edge (resp. even edge).
We may omit the subscripts if it is clear from the context.
The \emph{signed Laplacian matrix} of $(G,\Sigma)$ is $L(G,\Sigma):=D(G)-A(G,\Sigma)$.
Note that $L(G)=L(G,\emptyset)$ and $Q(G)=L(G,E(G))$.
A signed graph is \emph{Laplacian integral} if every eigenvalue of its signed Laplacian matrix is an integer.
The \emph{spectrum} of $(G,\Sigma)$ is the spectrum of $L(G,\Sigma)$, that is, the multiset of eigenvalues of $L(G,\Sigma)$.

Similar to (signless) Laplacian matrices, the Laplacian integrality of signed graphs has generated interests.
Hou, Li, and Pan~\cite{hou03} showed that the spectrum of the signed Laplacian matrix of a signed graph is invariant under taking a switching operation at a vertex.
For a signed graph $(G,\Sigma)$ and a vertex $v$ of $G$, \emph{switching at $v$} is an operation of replacing $\Sigma$ with the sign obtained from $\Sigma$ by converting every odd edge incident with $v$ to an even edge and vice versa.
In other words, for the set $\delta(v)$ of edges of~$G$ incident with~$v$, the switching at~$v$ is an operation of replacing $\Sigma$ with the symmetric difference $\Sigma\triangle\delta(v)$.
We remark that when we switch at more than one vertices, the order of the vertices does not affect the resulting signed graph.
Two signed graphs are \emph{switching equivalent}, or \emph{equivalent} for short, if one of them is obtained from a sequence of switchings from the other.
One can easily check that the relation is indeed an equivalence relation, and we will denote this relation by~$\sim$.
For certain classes $\mathcal{C}$ of graphs, it has been revealed the structure of Laplacian integral signed graphs whose underlying graphs are in~$\mathcal{C}$.
Wang and Hou~\cite{wang21} characterized Laplacian integral signed graphs of maximum degree at most~$3$, and Wang and Gao~\cite{wang23} characterized Laplacian integral signed graphs whose underlying graphs are obtained from trees by adding at most~$3$ edges.

While there are some structural results of Laplacian integral signed graphs, a general methodology of constructing Laplacian integral signed graphs has been not known so far.
On the other hand, for graphs, So~\cite{so99} and Kirkland~\cite{kirkland04} characterized the way of constructing a Laplacian integral graph from another Laplacian integral graph by adding a new edge.
For non-adjacent vertices~$v$ and~$w$ of a graph~$G$, we denote by $G+vw$ the graph obtained from~$G$ by adding a new edge~$vw$.
For a subset~$X$ of~$E(G)$, we denote by $G\setminus X$ the graph obtained from~$G$ by removing all edges in~$X$.
If $X=\{e\}$, then we may write $G\setminus e$ for~$G\setminus X$.
We say that \emph{spectral integral variation} occurs under the addition of the new edge~$vw$ if the spectra of $L(G)$ and $L(G+vw)$ differ by integer quantities.
By Cauchy's Interlacing theorem (see~\cite{horn13}), if spectral integral variation occurs under an edge addition, then two cases happen: either one eigenvalue of $L(G)$ is increased by $2$, or two eigenvalues of $L(G)$ are increased by $1$.
We say that spectral integral variation is of \emph{type~1} if the former case happens, and of \emph{type~2} if the latter case happens.
So~\cite{so99} characterized spectral integral variation of type~1 and Kirkland~\cite{kirkland04} characterized that of type~2.
Note that if we add an edge with spectral integral variation to a Laplacian integral graph, then the resulting graph is also Laplacian integral.

For a signed graph $(G,\Sigma)$ and non-adjacent vertices~$v$ and~$w$ of~$G$, we say that \emph{spectral integral variation} occurs under the addition of a new even (resp. odd) edge $vw$ if the spectra of $L(G,\Sigma)$ and $L(G+vw,\Sigma)$ (resp. $L(G+vw,\Sigma\cup\{vw\})$) differ by integer quantities.
As in the graph cases, by the interlacing theorem of signed graphs in~{\cite[Theorem~2.5]{belardo15}}, the same two cases of spectral integral variation happens under an edge addition to a signed graph, so we use the same notions of types for signed graphs.

In this paper, we initiate a systematic study of building Laplacian integral signed graphs by extending the characterizations of spectral integral variation in~\cite{so99,kirkland04}.
We denote by $\mathbf{1}_n$ (resp. $\mathbf{0}_n$) the $n$-dimensional vector whose coordinates are ones (resp. zeros).
We may omit the subscripts of~$\mathbf{1}_n$ and~$\mathbf{0}_n$ if it is clear from the context.
Here are our main results.

\begin{theorem}\label{thm:main1 even}
    For a signed graph $(G,\Sigma)$ and non-adjacent vertices~$v$ and~$w$ of~$G$, spectral integral variation of type~1 occurs under the addition of an even edge $vw$ if and only if $N^-(v)=N^-(w)$ and $N^+(v)=N^+(w)$.
\end{theorem}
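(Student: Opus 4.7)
The plan is to reformulate the edge addition as a rank-one positive semidefinite perturbation and then to analyze the resulting spectral change via the secular equation. Writing $e_v,e_w$ for the standard basis vectors indexed by $v,w$ and setting $x:=e_v-e_w$, a direct computation shows that
$$L(G+vw,\Sigma)=L(G,\Sigma)+xx^{\top},$$
since adding an even edge $vw$ adds $1$ to each of $D_{vv}$ and $D_{ww}$ and adds $1$ (through $-A_{vw}$) to each of the off-diagonal entries $(v,w)$ and $(w,v)$. In particular $\|x\|^{2}=2$, matching the trace increase expected in type~1 variation.

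The key reduction I would establish is: \emph{type~1 spectral integral variation occurs if and only if $x=e_v-e_w$ is an eigenvector of $L(G,\Sigma)$}. The ``if'' direction is immediate, since if $L(G,\Sigma)x=\lambda x$ then $L(G+vw,\Sigma)x=(\lambda+2)x$, while any eigenvector of $L(G,\Sigma)$ orthogonal to $x$ remains an eigenvector of $L(G+vw,\Sigma)$ with the same eigenvalue, so the spectrum is obtained from $\mathrm{spec}(L(G,\Sigma))$ by replacing one copy of $\lambda$ with $\lambda+2$. For the converse, decompose $x=\sum_{j}x_{j}$ along the eigenspaces of the distinct eigenvalues $\mu_{1}<\dots<\mu_{k}$ of $L(G,\Sigma)$ and set $J:=\{j:x_{j}\ne 0\}$. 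The standard secular-equation analysis of a rank-one positive update shows that $\mathrm{spec}(L(G+vw,\Sigma))$ is obtained by deleting one copy of $\mu_{j}$ for each $j\in J$ and inserting $|J|$ new eigenvalues, each strictly distinct from every $\mu_{j}$ with $j\in J$. The deletions and insertions therefore live on disjoint spectral points, so the total multiplicity change between the two spectra equals $2|J|$; type~1 requires this count to be exactly $2$, forcing $|J|=1$, i.e., $x$ lies in a single eigenspace of $L(G,\Sigma)$.

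It then remains to characterize combinatorially when $e_v-e_w$ is an eigenvector of $L(G,\Sigma)$. Since $v$ and $w$ are non-adjacent, the $v$- and $w$-entries of $L(G,\Sigma)(e_v-e_w)$ are $\deg(v)$ and $-\deg(w)$ respectively, which forces the eigenvalue to equal $\deg(v)=\deg(w)$. For $i\ne v,w$, the $i$-entry equals $A(G,\Sigma)_{iw}-A(G,\Sigma)_{iv}$, which vanishes exactly when $i$ stands in the same signed relation to $v$ as to $w$, namely a common even neighbor, a common odd neighbor, or a non-neighbor of both. Running through these cases gives precisely $N^{+}(v)=N^{+}(w)$ and $N^{-}(v)=N^{-}(w)$. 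I expect the main obstacle to be the type~1 reduction above: the interlacing bookkeeping has to be carried out carefully, using the strict separation of the secular roots from the $\mu_{j}$ with $j\in J$, to show that no accidental cancellation can reduce the $2|J|$ count when $|J|\ge 2$; the final combinatorial step is routine case analysis.
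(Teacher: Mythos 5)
Your proof is correct, and it follows the same skeleton as the paper's: reduce the type-1 condition to the statement that $e_v-e_w$ is an eigenvector of $L(G,\Sigma)$, then read off $N^{+}(v)=N^{+}(w)$ and $N^{-}(v)=N^{-}(w)$ from the coordinates of $L(G,\Sigma)(e_v-e_w)$. The difference is in how the reduction is obtained. The paper gets it in one line by invoking So's criterion (\Cref{thm:symmetric}): for symmetric $A$ and symmetric $B$ with spectrum $\{\beta,0,\dots,0\}$, the spectrum of $A+B$ equals $\{\alpha_1+\beta,\alpha_2,\dots,\alpha_n\}$ if and only if $AB=BA$; for $B=xx^{t}$ the commutativity condition is precisely that $x$ is an eigenvector of $A$. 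Your secular-equation argument is a self-contained proof of this criterion in the rank-one case, and the bookkeeping is sound: the $|J|$ new eigenvalues are distinct from one another and from every active $\mu_j$, $j\in J$, so the total multiplicity change between the two spectra is exactly $2|J|$, while type~1 variation forces it to be $2$. (One point worth stating explicitly if you write this up: a secular root may coincide with an \emph{inactive} eigenvalue $\mu_i$, $i\notin J$; this does not cause cancellation, because no copy of $\mu_i$ is deleted, so the count $2|J|$ stands.) The trade-off is purely length versus self-containedness; mathematically the two routes are the same reduction, and your final combinatorial step matches the paper's.
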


\begin{theorem}\label{thm:main2 even}
    Let $(G,\Sigma)$ be a signed graph and~$v$ and~$w$ be non-adjacent vertices of $G$ with degrees~$d_1$ and~$d_2$, respectively.
    If $L(G,\Sigma)$ is given as 
    \begin{equation}
        \left[\begin{array}{rr|r|r|r|r|r}
            d_1 & 0 & -\mathbf{1}^t & \mathbf{0}^t & -\mathbf{1}^t & -\mathbf{1}^t & \mathbf{0}^t\\
            0 & d_2 & \mathbf{0}^t & -\mathbf{1}^t & -\mathbf{1}^t & \mathbf{1}^t & \mathbf{0}^t\\\hline
            -\mathbf{1} & \mathbf{0} & L_{11} & L_{12} & L_{13} & L_{14} & L_{15}\\\hline
            \mathbf{0} & -\mathbf{1} & L_{21} & L_{22} & L_{23} & L_{24} & L_{25}\\\hline
            -\mathbf{1} & -\mathbf{1} & L_{31} & L_{32} & L_{33} & L_{34} & L_{35}\\\hline 
            -\mathbf{1} & \mathbf{1} & L_{41} & L_{42} & L_{43} & L_{44} & L_{45}\\\hline
            \mathbf{0} & \mathbf{0} & L_{51} & L_{52} & L_{53} & L_{54} & L_{55}
        \end{array}\right]
        \label{eq:centered}
    \end{equation}
    where the first and the second columns are indexed by~$v$ and~$w$, respectively, then spectral integral variation of type~2 occurs under the addition of an even edge $vw$ if and only if the following conditions hold:
    \begin{align}
        L_{11}\mathbf{1}-L_{12}\mathbf{1}+2L_{14}\mathbf{1}&=(d_2+1)\mathbf{1},\label{eq:main 1}\tag{a}\\
        L_{21}\mathbf{1}-L_{22}\mathbf{1}+2L_{24}\mathbf{1}&=-(d_1+1)\mathbf{1},\label{eq:main 2}\tag{b}\\
        L_{31}\mathbf{1}-L_{32}\mathbf{1}+2L_{34}\mathbf{1}&=-(d_1-d_2)\mathbf{1},\label{eq:main 3}\tag{c}\\
        L_{41}\mathbf{1}-L_{42}\mathbf{1}+2L_{44}\mathbf{1}&=(d_1+d_2+2)\mathbf{1},\label{eq:main 4}\tag{d}\\
        L_{51}\mathbf{1}-L_{52}\mathbf{1}+2L_{54}\mathbf{1}&=\mathbf{0}.\label{eq:main 5}\tag{e} 
    \end{align}
\end{theorem}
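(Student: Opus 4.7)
The plan is to rewrite the edge addition as a positive semi-definite rank-one perturbation: with $x:=e_v-e_w$ we have $L':=L(G+vw,\Sigma)=L(G,\Sigma)+xx^t$ and $\|x\|^2=2$. By the interlacing theorem, type~2 integral variation is equivalent to the existence of a $2$-dimensional $L$-invariant subspace $W\ni x$ on which $L$ has eigenvalues $\lambda_1,\lambda_2$ and $L'$ has $\lambda_1+1,\lambda_2+1$; the inclusion $x\in W$ automatically guarantees that $L'$ agrees with $L$ on $W^{\perp}$, so no other eigenvalues move.

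The key observation is that, for the block structure in~\eqref{eq:centered}, the vector $Lx$ is itself block-constant: a direct computation gives
\[
Lx=(d_1,-d_2,-\mathbf{1},\mathbf{1},\mathbf{0},-2\mathbf{1},\mathbf{0})^t.
\]
Outside the twin case (where $x$ is already an eigenvector of $L$, forcing type~1 by Theorem~\ref{thm:main1 even}), $Lx\notin\operatorname{span}(x)$, so any $2$-dimensional $L$-invariant subspace containing $x$ must contain a block-constant vector of the form $z:=(q_v,q_w,\mathbf{1},-\mathbf{1},\mathbf{0},2\mathbf{1},\mathbf{0})^t$ for some scalars $q_v,q_w$ with $q_v+q_w=d_2-d_1$, obtained by subtracting a suitable multiple of $x$ from $Lx$. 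Computing $Lz$ block by block then produces precisely the combinations $L_{i1}\mathbf{1}-L_{i2}\mathbf{1}+2L_{i4}\mathbf{1}$ appearing on the left-hand sides of~\eqref{eq:main 1}--\eqref{eq:main 5}, and the requirement $Lz\in W$ forces each such combination to be a scalar multiple of $\mathbf{1}$ on $V_i$.

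For the sufficiency direction I would take (a)--(e) as given, form $z$ explicitly, verify $Lz\in W:=\operatorname{span}(x,z)$ directly from the five block conditions, and compute the $2\times 2$ matrices of $L|_W$ and $L'|_W$ in the basis $\{x,z\}$. A trace-and-determinant calculation shows the eigenvalues of $L'|_W$ are exactly one larger than those of $L|_W$, giving type~2 integral variation. The necessity direction reverses the same computation: from $Lz\in W$ the constancy of $L_{i1}\mathbf{1}-L_{i2}\mathbf{1}+2L_{i4}\mathbf{1}$ on $V_i$ is immediate, and the eigenvalue constraint ``each shift is exactly~$+1$'' pins down the specific scalar values.

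The main obstacle will be the bookkeeping required to unpack the right-hand sides of (a)--(e) from the $2\times 2$ trace/determinant identities; determining $q_v,q_w$ correctly, tracking the sign flip in~\eqref{eq:main 4} coming from the odd edges between $w$ and $V_4$, and invoking the degree relations $d_1=|V_1|+|V_3|+|V_4|$ and $d_2=|V_2|+|V_3|+|V_4|$ (whence $d_1-d_2=|V_1|-|V_2|$) to absorb block sizes into the stated constants $d_2+1$, $-(d_1+1)$, $-(d_1-d_2)$, $d_1+d_2+2$, $\mathbf{0}$, all require careful sign-chasing. The potential degenerate case $\lambda_1=\lambda_2$ need not be treated separately, since it would force $W$ to be an eigenspace on which $x$ is an eigenvector of $L$, yielding type~1 rather than type~2.
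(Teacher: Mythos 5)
Your proposal is correct in substance and, for the necessity direction, takes a genuinely cleaner route than the paper. The paper reaches the block-constant eigenvectors in two steps: \Cref{co:siv2 eigenvectors} shows by an orthogonality argument (building an auxiliary independent family $U$ from the remaining eigenvectors) that the tails of the two moving eigenvectors are proportional to $x_2-x_1$, and \Cref{le_siv2_final} then pins down their first two entries via a computation with $t_{vw}$ that needs separate treatment of the degenerate cases $\lambda_i=d_1$ and $\lambda_i=d_2$. Your observation that the two-dimensional $L$-invariant subspace $W\ni x$ must equal $\operatorname{span}(x,Lx)$ once $Lx\notin\operatorname{span}(x)$, together with the explicit block-constant form of $Lx$, produces the same vector $z$ in one line; I checked that the requirement $Lz\in W$ alone already forces \eqref{eq:main 3} and \eqref{eq:main 5} (using $q_v+q_w=d_2-d_1$), and that the single determinant identity $\det(L'|_W)-\det(L|_W)=\operatorname{tr}(L|_W)+1$ then yields exactly the constants in \eqref{eq:main 1}, \eqref{eq:main 2}, and \eqref{eq:main 4}, so your plan does close. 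Two points deserve care in the write-up. First, the equivalence ``type~2 variation $\Leftrightarrow$ existence of such a $W$'' is not a consequence of Cauchy interlacing alone: the direction from type~2 variation to the existence of $W$ requires the rank-one-update (secular equation) analysis showing that $x$ has nonzero projection onto exactly two eigenspaces of $L$; this is precisely Kirkland's theorem restated as \Cref{prop:siv2 eigenvectors}, and it should be cited or proved rather than asserted. Second, $Lz\in\operatorname{span}(x,z)$ constrains not only the five blocks of $Lz$ but also its first two coordinates; their consistency turns out to be automatic, but only after invoking $d_1=\abs{V_1}+\abs{V_3}+\abs{V_4}$ and $d_2=\abs{V_2}+\abs{V_3}+\abs{V_4}$, so this check should appear explicitly in the sufficiency direction. (The vacuous ``twin'' case, where all of $V_1,V_2,V_4$ are empty, is excluded by the same appeal to \Cref{thm:main1 even} that the paper makes.)
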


In~\Cref{sec:siv}, we discuss how to obtain an equivalent signed graph whose signed Laplacian matrix is as in~\eqref{eq:centered}, thus~\Cref{thm:main2 even} actually covers every spectral integral variation of type~2 occurred by adding an even edge.
We also present analogues of these theorems for the cases of adding odd edges in the section.
We remark that the proof of~\Cref{thm:main2 even} leads to a simpler proof for the characterization in~\cite{kirkland04}.

Kirkland~\cite{kirkland05} introduced integrally completable graphs.
A graph $G$ is \emph{integrally completable} if there exists a sequence $G_0,\ldots,G_m$ of Laplacian integral graphs such that $G_0=G$, $G_m$ is a complete graph, and for each $i\in[m]$, $G_i=G_{i-1}+v_iw_i$ for some non-adjacent vertices $v_i$ and $w_i$ of~$G_{i-1}$.
In other words, from an integrally completable graph, we can make a complete graph by recursively adding new edges with spectral integral variation in a specific way.
By using the characterization in~\cite{kirkland04}, Kirkland~\cite{kirkland05} showed that a graph is integrally completable if and only if it has no induced subgraph isomorphic to $P_4$ (an induced path on four vertices) or $2K_2$ (the disjoint union of two copies of $K_2$).

We generalize the integrally completability to signed graphs.
For a signed graph $(K_n,\Sigma)$, not necessarily Laplacian integral, an $n$-vertex signed graph $(G,\Sigma')$ is \emph{integrally $\Sigma$-completable} if there exists a sequence $(G_0,\Sigma_0),\ldots,(G_m,\Sigma_m)$ of signed graphs such that $(G_0,\Sigma_0)=(G,\Sigma')$, $(G_m,\Sigma_m)=(K_n,\Sigma)$, and for each $i\in[m]$, there exists $v_iw_i\in E(K_n)-E(G_{i-1})$ such that $(G_i,\Sigma_i)$ is obtained from $(G_{i-1},\Sigma_{i-1})$ by adding a new (odd or even) edge with spectral integral variation.
Since~$K_n$ is Laplacian integral, observe that a graph~$G$ is integrally completable if and only if a signed graph $(G,\emptyset)$ is integrally $\emptyset$-completable.
In this manner, integrally $\Sigma$-completable signed graphs generalizes integrally completable graphs.

We fully characterize integrally $\Sigma$-completable graphs for every signed graph $(K_n,\Sigma)$.
For a signed graph $(G,\Gamma)$, we say that a triangle~$uvw$ of~$G$ is \emph{odd} (resp. \emph{even}) if $\abs{\Gamma\cap\{uv,uw,vw\}}$ is odd (resp. even).
We denote a triangle of a graph on vertices~$u$, $v$, and~$w$ by~$uvw$.

\begin{theorem}\label{thm:main3}
    Let $(K_n,\Sigma)$ be a signed graph with $n\geq4$, let $X(\Sigma)$ be the set of edges~$vw$ of~$K_n$ such that $uvw$ is even for every $u\in V(K_n)-\{v,w\}$, and let $Y(\Sigma)$ be the set of edges~$vw$ of~$K_n$ such that for every $u\in V(K_n)-\{v,w\}$, $uvw$ is odd, and the number of odd triangles containing~$u$ and~$v$ is one more than that of even triangles containing~$u$ and~$v$.
    Then an $n$-vertex signed graph $(G,\Sigma')$ is integrally $\Sigma$-completable if and only if $E(K_n)-E(G)\subseteq X(\Sigma)\cup Y(\Sigma)$, $\Sigma'=\Sigma\cap E(G)$, and $K_n\setminus(X(\Sigma)-E(G))$ is integrally completable.
\end{theorem}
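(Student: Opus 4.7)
The argument splits into a necessity direction and a sufficiency direction; both rely on the signed variation criteria from \Cref{thm:main1 even} and \Cref{thm:main2 even}, together with their odd-edge analogues obtained via switching at one endpoint, and on Kirkland's graph-theoretic characterization of integrally completable graphs via the forbidden induced subgraphs $P_4$ and $2K_2$.

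For necessity, suppose $(G,\Sigma')$ is integrally $\Sigma$-completable via a sequence $(G_0,\Sigma_0),\ldots,(G_m,\Sigma_m)$. The identity $\Sigma'=\Sigma\cap E(G)$ is immediate, since an edge addition never alters the sign of a pre-existing edge. For each $vw\in E(K_n)-E(G)$ I examine the step at which $vw$ is added. If this is a type~1 addition, \Cref{thm:main1 even} (or its odd analogue) gives matching signed neighborhoods of $v$ and $w$ at that step, and I would propagate this identification through the remaining additions---any later insertion incident to $v$ or $w$ is constrained by its own type~1 or type~2 condition---to conclude that every triangle through $vw$ in $(K_n,\Sigma)$ is even, hence $vw\in X(\Sigma)$. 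If the addition is of type~2, I would unpack the block equations~(a)--(e) of \Cref{thm:main2 even} and argue similarly that every triangle through $vw$ is odd and that the refined count condition defining $Y(\Sigma)$ holds, hence $vw\in Y(\Sigma)$. Finally, after discarding signs, the subsequence of additions corresponding to $X(\Sigma)$-edges can be identified as an integrally completable graph sequence from $K_n\setminus(X(\Sigma)-E(G))$ to $K_n$, which gives condition~(iii).

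For sufficiency, given the three conditions, I would construct the completing sequence in two phases. In Phase~I, I insert the edges of $Y(\Sigma)-E(G)$ one at a time via signed type~2 variations: the disjointness of $X(\Sigma)$ and $Y(\Sigma)$, the uniform odd-triangle property, and especially the triangle-count condition let me verify equations~(a)--(e) of \Cref{thm:main2 even} at each step. After Phase~I the underlying graph is $K_n\setminus(X(\Sigma)-E(G))$, which is integrally completable by hypothesis; in Phase~II I follow any such graph-level integrally completable sequence, and because every remaining edge lies in $X(\Sigma)$, the uniform even-triangle parity forces the signed neighborhoods of the two endpoints to coincide, so each step lifts to a signed type~1 variation via \Cref{thm:main1 even} (after switching at one endpoint if the added edge is odd).

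The principal obstacle I anticipate is Phase~I of the sufficiency direction: choosing a valid order for the $Y$-edges and verifying all five row-sum identities of \Cref{thm:main2 even} as the signed Laplacian evolves. The odd-triangle count condition built into the definition of $Y(\Sigma)$ should capture precisely what those equations demand at each intermediate step, but making this quantitative requires careful block-wise bookkeeping of how the blocks in~\eqref{eq:centered} reshuffle after each new $Y$-edge. A secondary subtlety in the necessity direction is the propagation argument, which must handle arbitrary interleavings of type~1 and type~2 additions sharing endpoints with a previously added edge.
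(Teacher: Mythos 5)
There is a genuine gap---in fact two concrete errors, plus the omission of the machinery that the theorem actually needs. First, your necessity argument rests on the dichotomy ``type~1 addition $\Rightarrow$ the edge is in $X(\Sigma)$, type~2 addition $\Rightarrow$ the edge is in $Y(\Sigma)$,'' and this is false. Take $(K_4,\emptyset)$, so that $X(\emptyset)=E(K_4)$ and $Y(\emptyset)=\emptyset$: completing the paw (a triangle with a pendant edge) to the diamond moves the Laplacian spectrum from $\{0,1,3,4\}$ to $\{0,2,4,4\}$, a type~2 variation, even though the added edge lies in $X(\emptyset)$. The type of the variation does not detect whether an edge belongs to $X(\Sigma)$ or $Y(\Sigma)$. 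The same example refutes your Phase~II claim that, for an edge $vw\in X(\Sigma)$, ``the uniform even-triangle parity forces the signed neighborhoods of the two endpoints to coincide'': the parity condition only constrains common neighbors of $v$ and $w$ in $K_n$, while at an intermediate step a vertex may be adjacent to exactly one of $v,w$ because other $X(\Sigma)$-edges are still absent, so the type~1 criterion of \Cref{thm:main1 even} fails. Indeed, if every $X(\Sigma)$-edge addition were automatically type~1, the hypothesis that $K_n\setminus(X(\Sigma)-E(G))$ be integrally completable would be vacuous (and $2K_2$ inside one part of $X(\Sigma)$ shows it is not).

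Second, what the proof actually requires---and what your sketch defers---is precisely the hard content. The paper first shows that $X(\Sigma)$ induces complete components (\Cref{lem:friendly}) and that $\abs{Y(\Sigma)}\leq1$ (\Cref{lem:suspicious}), then proves a spectral decomposition for vertex substitution (\Cref{lem:substitution}) which is the mechanism by which signed spectral integral variation for an $X(\Sigma)$-edge is shown \emph{equivalent} to unsigned spectral integral variation inside a single component; only through this equivalence does Kirkland's $P_4$/$2K_2$ characterization enter. For edges outside $X(\Sigma)$, one needs the full characterization that such an edge admits spectral integral variation (with the correct parity, and with any subset of $X(\Sigma)$ removed) if and only if it lies in $Y(\Sigma)$ (\Cref{lem:siv outside}); this is exactly the ``principal obstacle'' you flag, and it is a genuine computation with the block equations \eqref{eq:main 1}--\eqref{eq:main 5}, not a bookkeeping detail. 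Finally, because the theorem is an equivalence, both directions must be run through a single induction (\Cref{lem:poset}) in which the edge-removal condition is itself an ``if and only if''; your ``propagate the identification through the remaining additions'' does not reduce to a situation where any of the local criteria apply, since at the step where $vw$ is inserted the remaining missing edges are not yet known to lie in $X(\Sigma)\cup Y(\Sigma)$. As written, the proposal does not constitute a proof.
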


We remark that every signed graph on at most~$3$ vertices is Laplacian integral, so adding any new edge yields spectral integral variation.
Thus, for a signed graph $(K_n,\Sigma)$ with $n\leq3$, the integrally $\Sigma$-completable graphs are nothing but the graphs $(K_n\setminus X,\Sigma-X)$ for $X\subseteq E(K_n)$.

Here is the outline of this paper.
In~\Cref{sec:siv}, we prove Theorems~\ref{thm:main1 even} and~\ref{thm:main2 even} as well as their analogues for the cases of adding odd edges.
In~\Cref{sec:completable}, we prove Theorem~\ref{thm:main3}.

\section{Spectral integral variation for signed graphs}\label{sec:siv}

We prove Theorems~\ref{thm:main1 even} and~\ref{thm:main2 even} in Subsections~\ref{subsec:siv one} and~\ref{subsec:siv two}, respectively.
The cases of adding odd edges will follow as their corollaries.
For $k\geq1$, we denote by $e_k$ the $k$-th standard unit basis vector.

\subsection{Spectral integral variation of type~1}\label{subsec:siv one}

We recall that if spectral integral variation of type~1 occurs under an edge addition, then the signed Laplacian matrix of the given signed graph has a unique eigenvalue which will be increased by $2$ after the edge addition.
To prove~\Cref{thm:main1 even}, we use the following theorem.

\begin{theorem}[So~{\cite[Theorem~1.4]{so99}}]\label{thm:symmetric}
    Let $A$ and $B$ be symmetric matrices with spectra, not necessary in descending order, $\{\alpha_1,\ldots,\alpha_n\}$ and $\{\beta,0,\ldots,0\}$, respectively.
    Then $A+B$ has eigenvalues $\{\alpha_1+\beta,\alpha_2,\ldots,\alpha_n\}$ if and only if $AB=BA$.
\end{theorem}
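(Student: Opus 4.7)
The plan is to exploit the fact that $B$ has rank at most~$1$. If $\beta=0$ then $B=0$ and both directions hold trivially, so assume $\beta\ne0$; then $B=\beta vv^T$ for some unit vector~$v$.

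For the sufficiency direction, suppose $AB=BA$. Since $A$ and $B$ are commuting symmetric matrices, they admit a common orthonormal eigenbasis; as $v$ spans the unique $1$-dimensional nonzero eigenspace of~$B$, it must be an eigenvector of~$A$, say $Av=\alpha_1v$. On the orthogonal complement of~$v$, the matrix~$B$ acts as zero while~$A$ carries the remaining spectrum $\{\alpha_2,\ldots,\alpha_n\}$, so $A+B$ has eigenvalues $\{\alpha_1+\beta,\alpha_2,\ldots,\alpha_n\}$, as desired.

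For the necessity direction, I would compute the characteristic polynomial of $A+B$ via the matrix determinant lemma,
\[
    \det(xI-A-B)=\det(xI-A)\bigl(1-\beta v^T(xI-A)^{-1}v\bigr),
\]
and expand using the spectral decomposition of~$A$. Let $\lambda_1,\ldots,\lambda_r$ be the distinct eigenvalues of~$A$ with multiplicities $m_1,\ldots,m_r$, and set $d_k:=\|P_kv\|^2$, where $P_k$ is the orthogonal projection onto the $\lambda_k$-eigenspace, so that $\sum_k d_k=1$ and $v^T(xI-A)^{-1}v=\sum_k d_k/(x-\lambda_k)$. The identity then factors as the product of $\prod_k(x-\lambda_k)^{m_k-1}$ and the degree-$r$ polynomial
\[
    f(x):=\prod_k(x-\lambda_k)-\beta\sum_k d_k\prod_{k'\ne k}(x-\lambda_{k'}).
\]
Writing $\alpha_1=\lambda_{k_0}$, matching with the prescribed spectrum $\{\alpha_1+\beta,\alpha_2,\ldots,\alpha_n\}$ forces $f(\lambda_k)=0$ for every $k\ne k_0$. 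A direct evaluation gives $f(\lambda_k)=-\beta d_k\prod_{k'\ne k}(\lambda_k-\lambda_{k'})$, so $d_k=0$ for all such $k$, and hence $d_{k_0}=1$. Thus $v$ lies in the $\alpha_1$-eigenspace of~$A$, so $v$ is an eigenvector of~$A$, and consequently $AB=\alpha_1B=BA$.

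The main obstacle is cleanly handling eigenvalues of~$A$ of multiplicity greater than one: a naive expansion in a full eigenbasis mixes several coefficients $(v^Tu_i)^2$ into each factor $(x-\lambda_k)$, making it unclear which individual projections must vanish. Grouping by distinct eigenvalues and tracking eigenspace projections, as above, bypasses this issue.
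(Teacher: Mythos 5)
The paper does not prove this statement: it imports it verbatim as Theorem~1.4 of So~\cite{so99} and uses it as a black box, so there is no in-paper argument to compare against. Your proof is correct and self-contained. The sufficiency direction is the standard simultaneous-diagonalization argument for commuting symmetric matrices, with the implicit relabelling (licensed by the phrase ``not necessary in descending order'') that makes $\alpha_1$ the eigenvalue of $A$ on the unit vector $v$ spanning the image of $B=\beta vv^T$. For necessity, the key computation checks out: cancelling the common factor $\prod_k(x-\lambda_k)^{m_k-1}$ from $\det(xI-A-B)=\prod_k(x-\lambda_k)^{m_k-1}f(x)$ and from the prescribed characteristic polynomial $(x-\alpha_1-\beta)(x-\lambda_{k_0})^{m_{k_0}-1}\prod_{k\neq k_0}(x-\lambda_k)^{m_k}$ yields $f(x)=(x-\alpha_1-\beta)\prod_{k\neq k_0}(x-\lambda_k)$, hence $f(\lambda_k)=0$ for all $k\neq k_0$; and indeed $f(\lambda_k)=-\beta d_k\prod_{k'\neq k}(\lambda_k-\lambda_{k'})$ with the product nonzero because the $\lambda_k$ are distinct, so $d_k=0$ for $k\neq k_0$, $d_{k_0}=1$, and $v$ lies in a single eigenspace of $A$, giving $AB=\alpha_1B=BA$. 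Your decision to group by distinct eigenvalues and track the eigenspace projections $P_k$ rather than an arbitrary eigenbasis is exactly what makes the multiplicity issue disappear, and the $\beta=0$ case is handled separately as it should be. Nothing further is needed.
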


\begin{proof}[Proof of \Cref{thm:main1 even}]
    We apply~\Cref{thm:symmetric} for $A:=L(G,\Sigma)$ where the first and the second columns are indexed by $v$ and $w$, respectively, and $B:=(e_1-e_2)(e_1-e_2)^t$.
    Note that both $A$ and $B$ are symmetric, $L(G+vw,\Sigma)=A+B$, and the spectrum of~$B$ is~$\{2,0,\ldots,0\}$.
    By~\Cref{thm:symmetric}, the spectral integral variation of type~1 occurs if and only if $AB=BA$.
    It implies that the first two columns of $L(G,\Sigma)$ are identical except for the first two entries, and therefore $N^-(v)=N^-(w)$ and $N^+(v)=N^+(w)$.
\end{proof}

As a corollary of \Cref{thm:main1 even}, we characterize the case when spectral integral variation of type~1 occurs under the addition of an odd edge.

\begin{corollary}
    For a signed graph $(G,\Sigma)$ and non-adjacent vertices~$v$ and~$w$ of~$G$, spectral integral variation of type~1 occurs under the addition of an odd edge $vw$ if and only if $N^-(v)=N^+(w)$ and $N^+(v)=N^-(w)$.
\end{corollary}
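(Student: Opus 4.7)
The plan is to reduce the odd-edge case to the even-edge case already handled by \Cref{thm:main1 even}, using the fact that switching at a vertex preserves the spectrum of the signed Laplacian matrix (Hou--Li--Pan~\cite{hou03}).

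Specifically, I would start with $(G,\Sigma)$ and switch at~$w$ to obtain an equivalent signed graph $(G,\Sigma')$, where $\Sigma'=\Sigma\triangle\delta_G(w)$. The key observation is that adding the odd edge~$vw$ to $(G,\Sigma)$ and then switching at~$w$ in the resulting signed graph yields exactly $(G+vw,\Sigma')$, because the new edge~$vw$ is flipped from odd to even by the switching, while all other edges incident with~$w$ undergo the same flip that defines~$\Sigma'$. Since switching preserves the spectrum of the signed Laplacian and, in particular, the multiset of differences between eigenvalues of $(G+vw,\Sigma\cup\{vw\})$ and $(G,\Sigma)$ coincides with that between $(G+vw,\Sigma')$ and $(G,\Sigma')$, spectral integral variation of type~1 occurs under the odd-edge addition to $(G,\Sigma)$ if and only if it occurs under the even-edge addition to $(G,\Sigma')$.

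Applying \Cref{thm:main1 even} to $(G,\Sigma')$, the latter is equivalent to $N^-_{G,\Sigma'}(v)=N^-_{G,\Sigma'}(w)$ and $N^+_{G,\Sigma'}(v)=N^+_{G,\Sigma'}(w)$. Since~$v$ and~$w$ are non-adjacent in~$G$, switching at~$w$ leaves the neighborhoods of~$v$ untouched, so $N^\pm_{G,\Sigma'}(v)=N^\pm_{G,\Sigma}(v)$, while it swaps the odd and even neighborhoods of~$w$, giving $N^-_{G,\Sigma'}(w)=N^+_{G,\Sigma}(w)$ and $N^+_{G,\Sigma'}(w)=N^-_{G,\Sigma}(w)$. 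Substituting these identities turns the equivalent condition into $N^-(v)=N^+(w)$ and $N^+(v)=N^-(w)$, as desired.

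There is essentially no obstacle here: the statement follows from \Cref{thm:main1 even} by switching invariance, and the only thing to verify carefully is that switching at~$w$ interchanges the notions of ``adding an odd edge $vw$'' and ``adding an even edge $vw$''. This bookkeeping on the sign of the new edge under the switching operation is the only point where mistakes could occur, but it is a routine symmetric-difference computation.
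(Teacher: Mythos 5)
Your proposal is correct and follows exactly the paper's own argument: switch at $w$ to convert the odd-edge addition into an even-edge addition for the equivalent signed graph $(G,\Sigma')$, then apply \Cref{thm:main1 even} and translate the neighborhood conditions via $N^\pm_{G,\Sigma'}(w)=N^\mp_{G,\Sigma}(w)$. The sign bookkeeping you flag is handled the same way in the paper and checks out.
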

\begin{proof}
    Let $(G,\Sigma')$ be the signed graph obtained from~$(G,\Sigma)$ by switching at~$w$.
    Since $(G+vw,\Sigma')$ is equivalent to $(G+vw,\Sigma\cup\{vw\})$, spectral integral variation of type~1 occurs in~$(G,\Sigma)$ under the addition of an odd edge~$vw$ if and only if that occurs in $(G,\Sigma')$ under the addition of an even edge $vw$.
    Since $N_{G,\Sigma'}^\pm(w)=N_{G,\Sigma}^\mp(w)$, by applying~\Cref{thm:main1 even} to $(G,\Sigma')$, the statement follows.
\end{proof}

\subsection{Spectral integral variation of type~2}\label{subsec:siv two}

Throughout this subsection, for a signed graph $(G,\Sigma)$ and non-adjacent vertices~$v$ and~$w$ of~$G$, we define
\begin{align*}
    a_{vw}&:=\abs{N(v)-N(w)},\\
    b_{vw}&:=\abs{N(w)-N(v)},\\
    c_{vw}&:=\abs{N^+(v)\cap N^+(w)}+\abs{N^-(v)\cap N^-(w)},\\
    d_{vw}&:=\abs{N^+(v)\cap N^-(w)}+\abs{N^-(v)\cap N^+(w)},\\
    t_{vw}&:=c_{vw}-d_{vw}.
\end{align*}
Observe that the degree of~$v$ in~$G$ is $a_{vw}+c_{vw}+d_{vw}$ and that of~$w$ is $b_{vw}+c_{vw}+d_{vw}$.

We begin with the following lemma to examine the relationships between the degrees of $v$ and $w$ and the eigenvalues of $L(G,\Sigma)$ affected by adding $vw$.
This is an analogue of~{\cite[Lemma~9]{yizheng02}} on signed graphs.
The proof basically follows that of~{\cite[Lemma~9]{yizheng02}}.

\begin{lemma}\label{le_siv2_3}
    Let $(G,\Sigma)$ be a signed graph and $v,w$ be non-adjacent vertices of~$G$ with degrees $d_1$ and $d_2$, respectively.
    Suppose that spectral integral variation of type~2 occurs under the addition of an even edge $vw$, and $\lambda_1,\lambda_2$ are the eigenvalues of $L(G,\Sigma)$ that are increased by~$1$.
    Then the following hold:
    \begin{enumerate}[label=(\roman*)]
        \item\label{siv2_3_1} $\lambda_1+\lambda_2=d_1+d_2+1$,
        \item\label{siv2_3_2} $\lambda_1\lambda_2=d_1d_2+t_{vw}$, and
        \item\label{siv2_3_3} $d_1+d_2-2t_{vw}=a_{vw}+b_{vw}+4d_{vw}>0$.
    \end{enumerate}
\end{lemma}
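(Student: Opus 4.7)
The plan is to extract both (i) and (ii) by equating two expressions for $\operatorname{tr}(L'^k) - \operatorname{tr}(L^k)$ for $k = 2, 3$, where $L := L(G, \Sigma)$ and $L' := L(G+vw, \Sigma)$. On the eigenvalue side, the type~2 hypothesis tells us that precisely the two eigenvalues $\lambda_1, \lambda_2$ shift to $\lambda_1 + 1, \lambda_2 + 1$ while all others are unchanged, so
\[
\operatorname{tr}(L'^k) - \operatorname{tr}(L^k) = (\lambda_1+1)^k - \lambda_1^k + (\lambda_2+1)^k - \lambda_2^k.
\]
On the combinatorial side, writing $L = D - A$ for the diagonal degree matrix $D$ and the signed adjacency matrix $A$, direct expansion of $(D-A)^k$ and the fact that $A$ has zero diagonal give
\[
\operatorname{tr}(L^2) = \sum_{u \in V(G)} \deg(u)^2 + 2\abs{E(G)}
\]
and
\[
\operatorname{tr}(L^3) = \sum_{u \in V(G)} \deg(u)^3 + 3 \sum_{u \in V(G)} \deg(u)^2 - 6(T_e - T_o),
\]
where $T_e$ and $T_o$ count the even and odd triangles of $(G, \Sigma)$. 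The last identity uses $\operatorname{tr}(A^3) = 6(T_e - T_o)$, which holds because the sign $A_{uv}A_{vw}A_{wu}$ of a triangle equals $+1$ on even triangles and $-1$ on odd ones.

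I would then compare $(G, \Sigma)$ with $(G+vw, \Sigma)$. Only $\deg(v)$ and $\deg(w)$ change (each increases by~$1$), and the newly created triangles are exactly $uvw$ for $u \in N(v) \cap N(w)$. Since the added edge $vw$ is even, the triangle $uvw$ is even precisely when $A_{uv} A_{uw} = +1$; this splits the new triangles into $c_{vw}$ even ones (those with $u \in N^+(v) \cap N^+(w)$ or $u \in N^-(v) \cap N^-(w)$) and $d_{vw}$ odd ones. Plugging into the trace formulas yields
\[
\operatorname{tr}(L'^2) - \operatorname{tr}(L^2) = 2(d_1 + d_2) + 4
\]
and
\[
\operatorname{tr}(L'^3) - \operatorname{tr}(L^3) = 3(d_1^2 + d_2^2) + 9(d_1 + d_2) + 8 - 6 t_{vw}.
\]
Setting these equal to the eigenvalue-side expressions, the $k=2$ equation is linear in $\lambda_1+\lambda_2$ and immediately gives (i); then the $k=3$ equation, after substituting (i) and using $\lambda_1^2 + \lambda_2^2 = (\lambda_1+\lambda_2)^2 - 2\lambda_1\lambda_2$, reduces to (ii).

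For (iii), the identity $d_1 + d_2 - 2 t_{vw} = a_{vw} + b_{vw} + 4 d_{vw}$ is immediate from the stated decompositions $d_1 = a_{vw} + c_{vw} + d_{vw}$, $d_2 = b_{vw} + c_{vw} + d_{vw}$ and from $t_{vw} = c_{vw} - d_{vw}$. Strict positivity follows by contrapositive of~\Cref{thm:main1 even}: if $a_{vw} + b_{vw} + 4 d_{vw} = 0$, then $a_{vw} = b_{vw} = d_{vw} = 0$, which forces $N^+(v) = N^+(w)$ and $N^-(v) = N^-(w)$; by~\Cref{thm:main1 even} the variation would then be of type~1, contradicting the type~2 hypothesis (the two types are mutually exclusive, since in type~1 only one eigenvalue changes whereas in type~2 two do).

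The main obstacle, mild as it is, is the signed-triangle bookkeeping: one must verify the coefficient $-6(T_e - T_o)$ in the expression for $\operatorname{tr}(L^3)$ using the signed adjacency matrix, and keep track of which of $N^+(v) \cap N^+(w)$, $N^-(v) \cap N^-(w)$, $N^+(v) \cap N^-(w)$, $N^-(v) \cap N^+(w)$ contribute even versus odd new triangles when the added edge $vw$ is even. Everything else is routine polynomial algebra in $d_1, d_2, t_{vw}, \lambda_1+\lambda_2, \lambda_1\lambda_2$.
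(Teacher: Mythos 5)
Your proposal is correct and follows essentially the same route as the paper: both parts (i) and (ii) come from equating the eigenvalue-side and matrix-side expressions for $\operatorname{tr}(L'^k)-\operatorname{tr}(L^k)$ with $k=2,3$, and (iii) comes from the degree decomposition together with the contrapositive of~\Cref{thm:main1 even}. The only cosmetic difference is that the paper computes the matrix-side differences by expanding the rank-one perturbation $\bigl(L+(e_1-e_2)(e_1-e_2)^t\bigr)^k$ term by term, whereas you difference global formulas for $\operatorname{tr}(L^k)$ in terms of degrees and signed triangle counts; the resulting equations (and hence the algebra) are identical.
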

\begin{proof}
    We may assume that the first two columns of $L:=L(G,\Sigma)$ are indexed by~$v$ and~$w$.
    Let $A:=(e_1-e_2)(e_1-e_2)^t$.
    Note that $L(G+vw,\Sigma)=L+A$.

    We first show~\ref{siv2_3_1}.
    On the one hand, $tr((L+A)^2)-tr(L^2)=(\lambda_1+1)^2+(\lambda_2+1)^2-\lambda_1^2-\lambda_2^2=2(\lambda_1+\lambda_2)+2$.
    On the other hand $tr((L+A)^2)-tr(L^2)=2tr(LA)+tr(A^2)=2(d_1+d_2)+4$.
    Thus,~\ref{siv2_3_1} holds.

    We now show~\ref{siv2_3_3}.
    Since $d_1=a_{vw}+c_{vw}+d_{vw}$, $d_2=b_{vw}+c_{vw}+d_{vw}$, and $t_{vw}=c_{vw}-d_{vw}$, we have that $d_1+d_2-2t_{vw}=a_{vw}+b_{vw}+4d_{vw}\geq0$.
    Suppose for contradiction that $d_1+d_2-2t_{vw}=0$.
    Then $a_{vw}=b_{vw}=d_{vw}=0$, so by~\Cref{thm:main1 even}, spectral integral variation of type~1 occurs under the addition of an even edge $vw$, a contradiction.

    Finally, we show~\ref{siv2_3_2}.
    On the one hand, $tr((L+A)^3)-tr(L^3)=(\lambda_1+1)^3+(\lambda_2+1)^3-\lambda_1^3-\lambda_2^3=3(\lambda_1^2+\lambda_2^2)+3(\lambda_1+\lambda_2)+2=3(\lambda_1^2+\lambda_2^2)+3(d_1+d_2)+5$ by~\ref{siv2_3_1}.
    On the other hand,
    \begin{align*}
        tr((L+A)^3)-tr(L^3)
        =&3tr(LA^2)+3tr(L^2A)+tr(A^3)\\
        =&6(d_1+d_2)+3(d_1^2+d_2^2+a_{vw}+b_{vw}+4d_{vw})+8\\
        =&3(d_1^2+d_2^2)+9(d_1+d_2)-6t_{vw}+8
    \end{align*}
    where the last equality holds by~\ref{siv2_3_3}.
    Thus,
    \[
        \lambda_1^2+\lambda_2^2=(d_1^2+d_2^2)+2(d_1+d_2)-2t_{vw}+1,
    \]
    and therefore by~\ref{siv2_3_1},
    \begin{align*}
        2\lambda_1\lambda_2
        &=(\lambda_1+\lambda_2)^2-(\lambda_1^2+\lambda_2^2)\\
        &=(d_1+d_2+1)^2-(d_1^2+d_2^2)-2(d_1+d_2)+2t_{vw}-1=2(d_1d_2+t_{vw}).\qedhere
    \end{align*}
\end{proof}

Kirkland~{\cite[Theorem~2.2]{kirkland04}} showed that for a graph $G$ and non-adjacent vertices~$v$ and~$w$ of~$G$, spectral integral variation of type~2 occurs under the addition of an edge $vw$ if and only if there exists a specific orthonormal basis of eigenvectors of $L(G)$.
We point out that the same proof of the theorem also works in the signed graph setting, so we restate the theorem in the signed graph setting as follows.
For computational convenience, we slightly modify the statement by considering orthogonal basis, instead of orthonormal one.

\begin{proposition}\label{prop:siv2 eigenvectors}
    Let $(G,\Sigma)$ be an $n$-vertex signed graph and $v,w$ be non-adjacent vertices of~$G$.
    Suppose that the first two columns of $L(G,\Sigma)$ are indexed by~$v$ and~$w$.
    Then spectral integral variation of type~2 occurs under the addition of an even edge $vw$ by increasing eigenvalues $\lambda_1,\lambda_2$ of $L(G,\Sigma)$ by~$1$ if and only if there exists an orthogonal basis of eigenvectors $v_1,\ldots,v_n$ of $L(G,\Sigma)$ satisfying the following conditions:
    \begin{enumerate}[label=(\roman*)]
        \item\label{siv2_1_1} for each $i\in[2]$, $v_i$ is a $\lambda_i$-eigenvector of the form
            \[
                v_i=\left[\begin{array}{c}
                    a_i\\
                    b_i\\\hline
                    u_i
                \end{array}\right]
            \]
            where $(a_i-b_i)^2/\|v_i\|^2=1+1/(\lambda_{3-i}-\lambda_i)$, and
        \item\label{siv2_1_2} for each $j\in[n]-[2]$, the first two entries of $v_j$ are equal.
    \end{enumerate}
\end{proposition}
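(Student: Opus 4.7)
The plan is to adapt Kirkland's argument from \cite{kirkland04}, which depends only on rank-one perturbation theory and therefore transfers verbatim to the signed setting. Write $L := L(G,\Sigma)$ and $A := (e_1-e_2)(e_1-e_2)^t$, so that $L(G+vw,\Sigma)=L+A$; here $A$ is rank-one positive semidefinite, with unique nonzero eigenvalue $2$ and corresponding direction $x := e_1 - e_2$. The key elementary fact underlying both directions is that if $v$ is an eigenvector of $L$ with $x^t v = 0$ (equivalently, with equal first two entries), then $Av = 0$, so $v$ remains an eigenvector of $L+A$ with the same eigenvalue.

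For the forward direction I would use the rank-one determinant identity
\[
\det(\lambda I - L - A) = \det(\lambda I - L)\bigl(1 - x^t(\lambda I - L)^{-1}x\bigr)
\]
together with the interlacing theorem of~\cite{belardo15} to track multiplicities. Under the hypothesis, every eigenvalue $\mu$ of $L$ other than $\lambda_1, \lambda_2$ has preserved multiplicity in $L+A$; this bookkeeping forces $x$ to be orthogonal to the $\mu$-eigenspace, so one can choose orthogonal eigenvectors $v_j$ ($j \geq 3$) with equal first two entries, establishing condition~\ref{siv2_1_2}. The remaining two eigenvectors $v_1, v_2$ lie in the $\lambda_1$- and $\lambda_2$-eigenspaces respectively and, together with $x$, span a two-dimensional invariant subspace. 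Setting $c_i := (a_i - b_i)/\|v_i\|$, the matrix of $L+A$ restricted to this span in the orthonormal basis $v_i/\|v_i\|$ becomes
\[
\begin{pmatrix} \lambda_1 + c_1^2 & c_1 c_2 \\ c_1 c_2 & \lambda_2 + c_2^2 \end{pmatrix},
\]
and requiring its eigenvalues to be $\lambda_1+1, \lambda_2+1$ yields $c_1^2 + c_2^2 = 2$ together with $\lambda_1 c_2^2 + \lambda_2 c_1^2 = \lambda_1 + \lambda_2 + 1$; solving gives $c_i^2 = 1 + 1/(\lambda_{3-i} - \lambda_i)$, which is condition~\ref{siv2_1_1}. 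The converse is the reverse computation: with such a basis, the vectors $v_j$ for $j \geq 3$ are fixed eigenvectors of $L+A$, and the same $2 \times 2$ analysis shows that on $\operatorname{span}(v_1,v_2)$ the eigenvalues shift by exactly $1$.

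The main technical obstacle is the multiplicity bookkeeping in the forward direction, in particular when $\lambda_1$ or $\lambda_2$ has multiplicity greater than one, or when one of $\lambda_1+1, \lambda_2+1$ coincides with another eigenvalue of $L$. In such degenerate cases, the argument that $x$ is orthogonal to every $\mu$-eigenspace for $\mu \ne \lambda_1, \lambda_2$ requires carefully combining interlacing with a direct inspection of the secular equation $x^t(\lambda I - L)^{-1}x = 1$ on each eigenspace to ensure that the orthogonal basis with the asserted structure genuinely exists.
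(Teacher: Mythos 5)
Your proposal is correct and takes essentially the same route as the paper, which offers no independent argument here but simply asserts that Kirkland's proof of \cite[Theorem~2.2]{kirkland04} --- the rank-one perturbation of $L(G,\Sigma)$ by $(e_1-e_2)(e_1-e_2)^t$, the secular-equation bookkeeping, and the $2\times2$ restriction to $\operatorname{span}(v_1,v_2)$ --- carries over verbatim to signed Laplacians. Your trace/determinant computation recovering $c_i^2=1+1/(\lambda_{3-i}-\lambda_i)$ checks out, and the multiplicity bookkeeping you flag is handled exactly as you indicate, since the secular roots strictly interlace the distinct eigenvalues onto which $e_1-e_2$ has nonzero projection, forcing there to be exactly two of them.
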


As a corollary of~\Cref{prop:siv2 eigenvectors} in the graph setting, Kirkland~{\cite[Corollary 2.5]{kirkland04}} described the subcolumn~$u_i$ of~$v_i$ for $i\in[2]$ in terms of subcolumns of the Laplacian matrix of a graph.
In the following corollary, we present an analogous result in the signed graph setting.
However, we remark that the proof of~{\cite[Corollary~2.5]{kirkland04}} is no longer applicable to the signed graph setting.
The reason is that Kirkland~\cite{kirkland04} crucially used an observation that~$\mathbf{1}$ is a null vector of the Laplacian matrix of a graph, which is generally false in signed graphs.
We present an independent proof for the analogous corollary.
This proof indeed yields simplified proofs of~{\cite[Corollary~2.5]{kirkland04}} and~{\cite[Theorem~2.6]{kirkland04}}.

\begin{corollary}\label{co:siv2 eigenvectors}
    Let $(G,\Sigma)$ be an $n$-vertex signed graph and $v,w$ be non-adjacent vertices of~$G$.
    Suppose that the first two columns of $L(G,\Sigma)$ are indexed by~$v$ and~$w$ and that there exists an orthogonal basis of eigenvectors $v_1,\ldots,v_n$ of~$L(G,\Sigma)$ satisfying~\Cref{prop:siv2 eigenvectors}\ref{siv2_1_1}--\ref{siv2_1_2}.
    Let $\lambda_1,\lambda_2$ be the eigenvalues of $L(G,\Sigma)$ increased by~$1$ after adding an even edge $vw$ to~$(G,\Sigma)$.
    For each $i\in[2]$, let $x_i$ be the vector obtained from the $i$-th column of $L(G,\Sigma)$ by removing the first two entries.
    For each $j\in[n]$, let $u_j$ be the vector obtained from $v_j$ by removing the first two entries.
    Then there exist non-zero constants $c_1$ and $c_2$ such that for each $i\in[2]$, $u_i=c_i(x_2-x_1)$.
    In particular, one can choose the orthogonal basis so that both~$c_1$ and~$c_2$ are~$1$.
\end{corollary}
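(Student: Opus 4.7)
The plan is to exploit two key facts: $e_1 - e_2$ lies in the subspace $V := \mathrm{span}(v_1, v_2)$, and $V$ is invariant under $L := L(G, \Sigma)$ since $v_1, v_2$ are $L$-eigenvectors.

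First I would establish $e_1 - e_2 \in V$. By condition~\ref{siv2_1_2} of \Cref{prop:siv2 eigenvectors}, $V^\perp = \mathrm{span}(v_3, \ldots, v_n)$ is contained in the hyperplane $H := \{x \in \mathbb{R}^n : x_1 = x_2\}$, so taking orthogonal complements gives $V \supseteq H^\perp = \mathrm{span}(e_1 - e_2)$. Next I would rule out $\lambda_1 = \lambda_2$: were they equal, $L$ would act as a scalar on $V$, so $L(e_1 - e_2)$ would be a scalar multiple of $e_1 - e_2$; but $L(e_1 - e_2) = Le_1 - Le_2$ has last $n-2$ entries $x_1 - x_2$ while $e_1 - e_2$ has last $n-2$ entries $\mathbf{0}$, forcing $x_1 = x_2$. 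That is equivalent to $N^-(v) = N^-(w)$ and $N^+(v) = N^+(w)$, so by \Cref{thm:main1 even} type~1 variation would occur, contradicting the type~2 hypothesis.

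With $\lambda_1 \neq \lambda_2$, I would write $e_1 - e_2 = p v_1 + q v_2$ for scalars $p, q$ not both zero, and apply $L$ to obtain $L(e_1 - e_2) = p \lambda_1 v_1 + q \lambda_2 v_2$. Reading off the last $n-2$ coordinates of each equation gives the system $p u_1 + q u_2 = \mathbf{0}$ and $p \lambda_1 u_1 + q \lambda_2 u_2 = x_1 - x_2$; eliminating one variable yields $p(\lambda_1 - \lambda_2) u_1 = x_1 - x_2 = -q(\lambda_1 - \lambda_2) u_2$, so $u_i = c_i(x_2 - x_1)$ for some constants $c_i$.

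The main obstacle is showing $c_1, c_2 \neq 0$, equivalently $p, q \neq 0$. If $p = 0$, then $q v_2 = e_1 - e_2$; since $e_1 - e_2$ has zero last $n-2$ entries, $u_2 = \mathbf{0}$, and then the three row blocks of $L v_2 = \lambda_2 v_2$ force $\lambda_2 = d_1 = d_2$ and $x_1 = x_2$, which again contradicts type~2 via \Cref{thm:main1 even}; $q \neq 0$ follows by symmetry. For the final statement, condition~\ref{siv2_1_1} depends only on the ratio $(a_i - b_i)^2 / \|v_i\|^2$ and is thus scale-invariant, so replacing $v_i$ by $v_i/c_i$ for $i \in [2]$ keeps the basis orthogonal and satisfying \ref{siv2_1_1} and~\ref{siv2_1_2}, while producing $c_1 = c_2 = 1$.
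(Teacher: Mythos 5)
Your proof is correct, and it reaches the conclusion by a genuinely shorter route than the paper's. Both arguments hinge on the same two facts: that $e_1-e_2\in\mathrm{span}(v_1,v_2)$ (which the paper asserts without the orthogonal-complement justification you supply) and that $x_1\neq x_2$ (else \Cref{thm:main1 even} forces type~1). But where the paper uses the relation $t_1v_1+t_2v_2=e_1-e_2$ only inside its Claim~1 to rule out $u_1=\mathbf{0}$ or $u_2=\mathbf{0}$, and then obtains the proportionality $u_i\parallel x_2-x_1$ by a separate dimension count --- normalizing the first entries of $v_3,\ldots,v_n$ to $0$ or $1$, building a set $U$ of $n-3$ linearly independent truncated vectors, and checking that $u_1$, $u_2$, and $x_2-x_1$ all lie in the one-dimensional orthogonal complement of $\mathrm{span}(U)$ --- you extract everything from the single $2\times2$ system obtained by truncating $e_1-e_2=pv_1+qv_2$ and its image under $L$ to the last $n-2$ coordinates. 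This eliminates the paper's case analysis and affine-independence argument entirely. Two minor remarks: the nonvanishing of $p$ and $q$ follows even more quickly from your own identity $p(\lambda_1-\lambda_2)u_1=x_1-x_2$ (if $p=0$ then $x_1=x_2$ immediately), so the detour through $u_2=\mathbf{0}$ and $\lambda_2=d_1=d_2$ is not needed; and your separate verification that $\lambda_1\neq\lambda_2$ is technically redundant since \Cref{prop:siv2 eigenvectors}\ref{siv2_1_1} already presupposes $\lambda_{3-i}-\lambda_i\neq0$, though making it explicit does no harm. The final rescaling step is handled correctly, since condition~\ref{siv2_1_1} is invariant under scaling each $v_i$.
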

\begin{proof}
    By~\Cref{prop:siv2 eigenvectors}\ref{siv2_1_2}, for each $j\in[n]-[2]$, the first two entries of $v_j$ are same.
    Thus, by scaling if necessary, we may assume that for each $j\in[n]-[2]$, the first entry of~$v_j$ is either~$1$ or~$0$.
    Furthermore, by relabeling if necessary, we may assume that there exists $m\in[n]-[1]$ such that for each $j\in[n]-[2]$, the first entry of~$v_j$ is~$1$ if and only if $j\leq m$.

    \begin{claim}\label{cl_siv2_2_1}
        Neither $u_1$ nor $u_2$ is $\mathbf{0}$.
    \end{claim}
    \begin{subproof}
        Suppose not.
        Since $v_1$ and $v_2$ span $e_1-e_2$, there exist scalars $t_1$ and $t_2$ with $t_1v_1+t_2v_2=e_1-e_2$.

        We first assume that $u_1=u_2=\mathbf{0}$.
        For each $i\in[2]$, since $v_i$ is a $\lambda_i$-eigenvector of $L(G,\Sigma)$, we have that $a_ix_1+b_ix_2=\mathbf{0}$.
        By multiplying $L(G,\Sigma)$ on both sides of $t_1v_1+t_2v_2=e_1-e_2$, we deduce that
        \[
            \mathbf{0}=t_1(a_1x_1+b_1x_2)+t_2(a_2x_1+b_2x_2)=x_1-x_2.
        \]
        It means that $N^-(v)=N^-(w)$ and $N^+(v)=N^+(w)$.
        Thus, by~\Cref{thm:main1 even}, the spectral integral variation is of type~1, a contradiction.

        Hence, exactly one of~$u_1$ and~$u_2$, say $u_1$, is~$\mathbf{0}$.
        Since $u_2\neq\mathbf{0}$, we deduce from $t_1v_1+t_2v_2=e_1-e_2$ that $t_2=0$, and therefore $a_1=-b_1$.
        Since $v_1$ is a $\lambda_1$-eigenvector of $L(G,\Sigma)$, we have that $a_1x_1+b_1x_2=\mathbf{0}$.
        Hence, $x_1-x_2=\mathbf{0}$, a contradiction.
    \end{subproof}

    If $m=2$, then $\{u_j:j\in[n]-[2]\}$ is linearly independent, so by the orthogonality, $u_1=u_2=\mathbf{0}$, contradicting~\Cref{cl_siv2_2_1}.
    Thus, $m\geq3$.
    Let $U:= \{u_j-u_3:j\in[m]-[3]\}\cup\{u_k:k\in[n]-[m]\}$.

    We show that $U$ is linearly independent.
    Since $\{v_j:j\in[m]-[3]\}$ is linearly independent, we have that $\{u_j:j\in[m]-[2]\}$ is affinely independent, which is equivalent to that $\{u_j-u_3:j\in[m]-[3]\}$ is linearly independent.
    We may assume that $m<n$, because otherwise we are done.
    For each $k\in[n]-[m]$ and every $\ell\in[n]$ distinct to $k$, since the first two entries of $v_k$ are $0$, we have that $u_k^tu_\ell=v_k^tv_\ell=0$.
    Hence, we deduce that $U$ is linearly independent.

    Since the rank of $U$ is $n-3$ and every vector in $U$ consists of $n-2$ entries, the orthogonal complement of the space spanned by $U$ has rank~$1$.
    Therefore, to show that for each $i\in[2]$, there exists $c_i$ with $u_i=c_i(x_2-x_1)$, it suffices to show that all of $u_1$, $u_2$, and $x_2-x_1$ are orthogonal to the vectors in~$U$.
    Note that such $c_i$ is non-zero by~\Cref{cl_siv2_2_1}.

    Let $i\in[2]$.
    We first show that $u_i$ is orthogonal to the vectors in~$U$.
    For each $j\in[m]-[3]$, since the first two entries of~$v_j-v_3$ are~$0$, we have that $u_i^t(u_j-u_3)=v_i^t(v_j-v_3)=0$.
    For each $k\in[n]-[m]$, since the first two entries of~$v_k$ are~$0$, we have that $u_i^tu_k=v_i^tv_k=0$.
    Therefore, $u_i$ is orthogonal to the vectors in~$U$.

    We now show that $x_2-x_1$ is orthogonal to the vectors in~$U$.
    For each $j\in[m]-[2]$, since the first two entries of~$v_j$ are~$1$ and $v_j$ is an eigenvector of $L(G,\Sigma)$, we have that $d_1-x_1^tu_j=d_2-x_2^tu_j$.
    Then $(x_2-x_1)^tu_j=d_2-d_1$, so that $(x_2-x_1)^t(u_j-u_3)=0$.
    For each $k\in[n]-[m]$, since the first two entries of~$v_k$ are~$0$ and $v_k$ is an eigenvector of $L(G,\Sigma)$, we have that $x_1^tu_k=x_2^tu_k=0$.
    Thus, $x_2-x_1$ is also orthogonal to the vectors in~$U$, and therefore for each $i\in[2]$, there exists $c_i\neq0$ with $u_i=c_i(x_2-x_1)$.
    By scaling if necessary, we can take the eigenvectors~$v_1$ and~$v_2$ of~$L(G,\Sigma)$ so that $c_1=c_2=1$.
\end{proof}

For a signed graph with a non-complete underlying graph, we now discuss how to obtain an equivalent signed graph whose signed Laplacian matrix is given as in~\eqref{eq:centered}.
Let $(G,\Sigma)$ be a signed graph and $v,w$ be non-adjacent vertices of $G$.
We say that $(G,\Sigma)$ is \emph{$(v,w)$-centered} if $N^-(v)=\emptyset$ and $N^-(w)\subseteq N(v)\cap N(w)$.
Observe that if $(G,\Sigma)$ is $(v,w)$-centered, then $L(G,\Sigma)$ can be arranged as~\eqref{eq:centered}.
In the following observation, we present a way of converting a signed graph with a non-complete underlying graph into an equivalent $(v,w)$-centered signed graph.
This will help us simplify the characterization of spectral integral variation.

\begin{observation}\label{obs:centered}
    Let $(G,\Sigma)$ be a signed graph and $v,w$ be non-adjacent vertices of~$G$.
    Let $(G,\Sigma')$ be a signed graph obtained from $(G,\Sigma)$ by switching at each vertex in $N_{G,\Sigma}^-(v)\cup(N_{G,\Sigma}^-(w)-N_G(v))$, in any order.
    Then $(G,\Sigma')$ is $(v,w)$-centered.
\end{observation}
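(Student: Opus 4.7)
The plan is to unpack what $(v,w)$-centeredness means edge-by-edge and verify both conditions directly. Write $S:=N_{G,\Sigma}^-(v)\cup(N_{G,\Sigma}^-(w)-N_G(v))$ for the switching set, and note that switching at each vertex of~$S$ in any order produces the sign~$\Sigma':=\Sigma\triangle\bigl(\bigcup_{u\in S}\delta(u)\bigr)$; in particular, the parity of an edge $e=xy$ is flipped precisely when $\abs{S\cap\{x,y\}}$ is odd. Before starting the case analysis, I will record two elementary facts used repeatedly: (i) $v,w\notin S$, since $v\notin N(v)$ and $v\notin N(w)$ (because $v,w$ are non-adjacent), and similarly for $w$; and (ii) the two sets $N_{G,\Sigma}^-(v)$ and $N_{G,\Sigma}^-(w)-N_G(v)$ in the union defining $S$ are disjoint, because the first is contained in $N_G(v)$ while the second is disjoint from $N_G(v)$.

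First I would verify $N_{G,\Sigma'}^-(v)=\emptyset$. Let $u\in N_G(v)$. The sign of $vu$ flips between $\Sigma$ and $\Sigma'$ iff $u\in S$. Since $u\in N_G(v)$, membership $u\in S$ reduces, by the disjointness in~(ii), to $u\in N_{G,\Sigma}^-(v)$. Thus $vu$ flips iff it was odd in $\Sigma$, so $vu$ is always even in $\Sigma'$.

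Next I would verify $N_{G,\Sigma'}^-(w)\subseteq N_G(v)\cap N_G(w)$ by contrapositive: suppose $u\in N_G(w)-N_G(v)$; I claim $wu$ is even in $\Sigma'$. Again the sign of $wu$ flips iff $u\in S$, and since $u\notin N_G(v)$ we have $u\notin N_{G,\Sigma}^-(v)$, so $u\in S$ reduces to $u\in N_{G,\Sigma}^-(w)-N_G(v)$, which, using $u\notin N_G(v)$, is equivalent to $u\in N_{G,\Sigma}^-(w)$. Hence $wu$ flips iff it was odd in $\Sigma$, so $wu$ is even in $\Sigma'$ in either case. Combining with the previous paragraph yields that $(G,\Sigma')$ is $(v,w)$-centered.

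There is essentially no obstacle here beyond bookkeeping; the only subtlety is to observe that the two pieces in the definition of $S$ are disjoint and that neither $v$ nor $w$ lies in $S$, so that one can read off the new parity of an edge incident to $v$ or $w$ by looking at the single vertex on the other end. I would present the argument in this streamlined form rather than enumerating all four sign/neighbor combinations.
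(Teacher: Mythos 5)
Your proof is correct; the paper states this as an Observation and omits the argument, and your direct edge-by-edge verification (using that $v,w\notin S$ and that the two pieces of $S$ are disjoint, so the new parity of an edge at $v$ or $w$ is determined by whether its other end lies in $S$) is exactly the intended routine check. One small slip: the resulting sign is $\Sigma\triangle\delta(u_1)\triangle\cdots\triangle\delta(u_m)$ for $S=\{u_1,\ldots,u_m\}$, a symmetric difference rather than the union you wrote; the parity rule you actually invoke --- an edge flips iff it has an odd number of ends in $S$ --- is the correct consequence of the symmetric-difference formula, and since every edge you examine is incident to $v$ or $w$, neither of which lies in $S$, nothing downstream is affected.
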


In the following lemma, we explicitly describes the eigenvectors $v_1$ and $v_2$ in~\Cref{co:siv2 eigenvectors} when $(G,\Sigma)$ is $(v,w)$-centered.

\begin{lemma}\label{le_siv2_final}
    Let $(G,\Sigma)$ be a $(v,w)$-centered signed graph for non-adjacent vertices~$v$ and~$w$ of~$G$ with degrees~$d_1$ and~$d_2$, respectively.
    Suppose that $L(G,\Sigma)$ is given as~\eqref{eq:centered} and that there is an orthogonal basis of eigenvectors $v_1,\ldots,v_n$ of $L(G,\Sigma)$ satisfying~\Cref{prop:siv2 eigenvectors}\ref{siv2_1_1}--\ref{siv2_1_2} such that the constants $c_1$ and $c_2$ in~\Cref{co:siv2 eigenvectors} are~$1$.
    Let $\lambda_1,\lambda_2$ be the eigenvalues of $L(G,\Sigma)$ increased by~$1$ after adding an even edge~$vw$ to $(G,\Sigma)$.
    Then for each $i\in[2]$,
    \begin{enumerate}[label=(\roman*)]
        \item\label{final-1} $a_i=-\lambda_i+d_2+1$ and
        \item\label{final-2} $b_i=\lambda_i-d_1-1$.
    \end{enumerate}
    That is,
    \begin{equation}
        v_1=\left[\begin{array}{c}
            -\lambda_1+d_2+1\\
            \lambda_1-d_1-1\\\hline
            \mathbf{1}\\\hline
            -\mathbf{1}\\\hline
            \mathbf{0}\\\hline
            2\cdot\mathbf{1}\\\hline
            \mathbf{0}
        \end{array}\right]
        \text{\qquad and\qquad}
        v_2=\left[\begin{array}{c}
            -\lambda_2+d_2+1\\
            \lambda_2-d_1-1\\\hline
            \mathbf{1}\\\hline
            -\mathbf{1}\\\hline
            \mathbf{0}\\\hline
            2\cdot\mathbf{1}\\\hline
            \mathbf{0}
        \end{array}\right].
        \label{eigenvectors}
    \end{equation}
\end{lemma}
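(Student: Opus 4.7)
The plan is to read off $a_i$ and $b_i$ from just the first two rows of the eigenvalue equation $L(G,\Sigma)v_i=\lambda_iv_i$, after using~\Cref{co:siv2 eigenvectors} to pin down the shape of the remaining entries of~$v_i$.

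First I would invoke~\Cref{co:siv2 eigenvectors} with $c_1=c_2=1$ to conclude that $u_i=x_2-x_1$ for $i\in[2]$, and read the two columns of~\eqref{eq:centered} block by block to obtain $x_1=(-\mathbf{1}^t,\mathbf{0}^t,-\mathbf{1}^t,-\mathbf{1}^t,\mathbf{0}^t)^t$ and $x_2=(\mathbf{0}^t,-\mathbf{1}^t,-\mathbf{1}^t,\mathbf{1}^t,\mathbf{0}^t)^t$, so that $x_2-x_1=(\mathbf{1}^t,-\mathbf{1}^t,\mathbf{0}^t,2\cdot\mathbf{1}^t,\mathbf{0}^t)^t$. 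This already accounts for the lower five blocks of the vectors displayed in~\eqref{eigenvectors}, leaving only the first two coordinates $a_i$ and $b_i$ to be determined.

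For those I would pair the first two rows of $L(G,\Sigma)$ with~$v_i$. Using $x_1^tx_1=d_1$, $x_2^tx_2=d_2$, and $x_1^tx_2=c_{vw}-d_{vw}=t_{vw}$ (the last equality uses the $(v,w)$-centered hypothesis $N^-(v)=\emptyset$, so that only blocks~$5$ and~$6$ contribute to $x_1^tx_2$), the row-$v$ and row-$w$ equations simplify to $(d_1-\lambda_i)a_i=d_1-t_{vw}$ and $(\lambda_i-d_2)b_i=d_2-t_{vw}$, respectively.

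The remaining step is purely algebraic. Combining parts~\ref{siv2_3_1} and~\ref{siv2_3_2} of~\Cref{le_siv2_3} shows that $\lambda_i$ satisfies $\lambda_i^2=(d_1+d_2+1)\lambda_i-(d_1d_2+t_{vw})$, and substituting this identity into the expansions of $(d_1-\lambda_i)(d_2+1-\lambda_i)$ and $(\lambda_i-d_2)(\lambda_i-d_1-1)$ collapses them to $d_1-t_{vw}$ and $d_2-t_{vw}$, respectively. Hence in the generic case $\lambda_i\notin\{d_1,d_2\}$ a single division yields the desired formulas for $a_i$ and $b_i$. I expect the main obstacle to be the degenerate sub-case $\lambda_i\in\{d_1,d_2\}$; there~\Cref{le_siv2_3}\ref{siv2_3_1} pins down the partner eigenvalue (for example $\lambda_i=d_1$ forces $t_{vw}=d_1$ and hence $\lambda_{3-i}=d_2+1$), one of the two row equations becomes trivial, and the matching coefficient must instead be extracted by combining the remaining non-trivial row equation with the norm condition in~\Cref{prop:siv2 eigenvectors}\ref{siv2_1_1} (or equivalently with the orthogonality relation $v_1^tv_2=0$).
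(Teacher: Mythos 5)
Your proposal is correct and follows essentially the same route as the paper: read off the lower blocks of $v_i$ from \Cref{co:siv2 eigenvectors}, derive $(d_1-\lambda_i)a_i=d_1-t_{vw}$ and $(\lambda_i-d_2)b_i=d_2-t_{vw}$ from the first two rows of the eigenvalue equation, use \Cref{le_siv2_3}\ref{siv2_3_1}--\ref{siv2_3_2} to verify the claimed values in the generic case, and fall back on the norm condition of \Cref{prop:siv2 eigenvectors}\ref{siv2_1_1} when $\lambda_i\in\{d_1,d_2\}$. One small correction: in the degenerate case your parenthetical alternative of using $v_1^tv_2=0$ instead of the norm condition does not work, since there (say $\lambda_i=d_1$) one gets $a_{3-i}=0$, so the term $a_1a_2$ vanishes identically and the orthogonality relation $a_1a_2+b_1b_2+(d_1+d_2-2t_{vw})=0$ is automatically satisfied without constraining $a_i$; the norm condition (as in the paper) is genuinely needed.
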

\begin{proof}
    We first show that for each $i\in[2]$,
    \begin{equation}
        (a_i-b_i)^2=(a_i^2+b_i^2+d_1+d_2-2t_{vw})\left(1+\frac{1}{\lambda_{3-i}-\lambda_i}\right).\label{eq:step1}
    \end{equation}
    By the assumption and~\Cref{co:siv2 eigenvectors}, $v_i^t=(a_i,b_i,\mathbf{1}^t,-\mathbf{1}^t,\mathbf{0},2\cdot\mathbf{1}^t,\mathbf{0}^t)$.
    By~\Cref{le_siv2_3}\ref{siv2_3_3}, we have that
    \[
        \|v_i\|^2=a_i^2+b_i^2+a_{vw}+b_{vw}+4d_{vw}=a_i^2+b_i^2+d_1+d_2-2t_{vw}.
    \]
    Then~\eqref{eq:step1} follows from~\Cref{prop:siv2 eigenvectors}\ref{siv2_1_1}.

    Next, we show that for each $i\in[2]$,
    \begin{align}
        (\lambda_i-d_1)a_i&=-d_1+t_{vw},\label{eq:step2-1}\\
        (\lambda_i-d_2)b_i&=d_2-t_{vw}.\label{eq:step2-2}
    \end{align}
    Since $v_i$ is a $\lambda_i$-eigenvector of $L(G,\Sigma)$, we have that $a_id_1-a_{vw}-2d_{vw}=\lambda_ia_i$.
    Since $d_1=a_{vw}+c_{vw}+d_{vw}$, we deduce that $(\lambda_i-d_1)a_i=-a_{vw}-2d_{vw}=-d_1+t_{vw}$.
    Similarly, as $v_i$ is a $\lambda_i$-eigenvector of $L(G,\Sigma)$, we have that $b_id_2+b_{vw}+2d_{vw}=\lambda_ib_i$.
    Since $d_2=b_{vw}+c_{vw}+d_{vw}$, we deduce that $(\lambda_i-d_2)b_i=d_2-t_{vw}$.

    We now show \ref{final-1}--\ref{final-2} for a fixed $i\in[2]$.
    We first show~\ref{final-1}.
    We have that
    \begin{align*}
        (\lambda_i-d_1)(-\lambda_i+d_2+1)
        &=\lambda_i(-\lambda_i+d_1+d_2+1)-d_1d_2-d_1\\
        &=\lambda_1\lambda_2-d_1d_2-d_1 \tag*{by~\Cref{le_siv2_3}\ref{siv2_3_1}}\\
        &=-d_1+t_{vw} \tag*{by~\Cref{le_siv2_3}\ref{siv2_3_2}}\\
        &=(\lambda_i-d_1)a_i \tag*{by~\eqref{eq:step2-1}.}
    \end{align*}
    If $\lambda_i\neq d_1$, then~\ref{final-1} directly follows.
    Thus, we may assume that $\lambda_i=d_1$.
    Then~\Cref{le_siv2_3}\ref{siv2_3_1} and~\ref{siv2_3_2} imply that $\lambda_{3-i}=d_2+1$ and $t_{vw}=d_1$, respectively.
    Note that $d_2-d_1=d_1+d_2-2t_{vw}>0$ by~\Cref{le_siv2_3}\ref{siv2_3_3}.
    Hence, \eqref{eq:step2-1} implies that $a_{3-i}=0$, and~\eqref{eq:step2-2} implies that $b_i=-1$ and $b_{3-i}=d_2-d_1$.
    Then by~\eqref{eq:step1},
    \begin{align*}
        (a_i+1)^2
        &=(a_i^2+1+d_2-d_1)\left(1+\frac{1}{d_2-d_1+1}\right)\\
        &=(a_i^2+1+b_{3-i})\cdot\frac{b_{3-i}+2}{b_{3-i}+1}.
    \end{align*}
    Therefore,
    \[
        0=(a_i^2+1+b_{3-i})(b_{3-i}+2)-(a_i+1)^2(b_{3-i}+1)=(b_{3-i}+1-a_i)^2,
    \]
    which means that $a_i=b_{3-i}+1=-\lambda_1+d_2+1$.

    Similarly, we show~\ref{final-2}.
    First, we have that
    \begin{align*}
        (\lambda_i-d_2)(\lambda_i-d_1-1)
        &=\lambda_i(\lambda_i-d_1-d_2-1)+d_1d_2+d_2\\
        &=-\lambda_1\lambda_2+d_1d_2+d_2 \tag*{by~\Cref{le_siv2_3}\ref{siv2_3_1}}\\
        &=d_2-t_{vw} \tag*{by~\Cref{le_siv2_3}\ref{siv2_3_2}}\\
        &=(\lambda_i-d_2)b_i \tag*{by~\eqref{eq:step2-2}.}
    \end{align*}
    If $\lambda_i\neq d_2$, then~\ref{final-2} directly follows.
    Thus, we may assume that $\lambda_i=d_2$.
    Then~\Cref{le_siv2_3}\ref{siv2_3_1} and~\ref{siv2_3_2} imply that $\lambda_{3-i}=d_1+1$ and $t_{vw}=d_2$, respectively.
    Note that $d_1-d_2=d_1+d_2-2t_{vw}>0$ by~\Cref{le_siv2_3}\ref{siv2_3_3}.
    Hence, \eqref{eq:step2-2} implies that $b_{3-i}=0$, and~\eqref{eq:step2-1} implies that $a_i=1$ and $a_{3-i}=d_2-d_1$.
    Then by~\eqref{eq:step1},
    \begin{align*}
        (1-b_i)^2
        &=(b_i^2+1+d_1-d_2)\left(1+\frac{1}{d_1-d_2+1}\right)\\
        &=(b_i^2+1-a_{3-i})\cdot\frac{2-a_{3-i}}{1-a_{3-i}}.
    \end{align*}
    Therefore,
    \[
        0=(b_i^2+1-a_{3-i})(2-a_{3-i})-(1-b_i)^2(1-a_{3-i})=(1-a_{3-i}+b_i)^2,
    \]
    which means that $b_i=a_{3-i}-1=\lambda_i-d_1-1$.
\end{proof}

We now prove~\Cref{thm:main2 even}.

\begin{proof}[Proof of~\Cref{thm:main2 even}]
    Suppose first that~\eqref{eq:main 1}--\eqref{eq:main 5} hold.
    Consider $x^2-(d_1+d_2+1)x+(d_1d_2+t_{vw})=0$.
    Since
    \begin{align*}
        &(d_1+d_2+1)^2-4(d_1d_2+t_{vw})\\
        &=(a_{vw}+b_{vw}+2c_{vw}+2d_{vw}+1)^2-4(a_{vw}+c_{vw}+d_{vw})(b_{vw}+c_{vw}+d_{vw})-4(c_{vw}-d_{vw})\\
        &=(a_{vw}-b_{vw})^2+2a_{vw}+2b_{vw}+8d_{vw}+1>0,
    \end{align*}
    the equation has two real roots $\eta_1$ and $\eta_2$.
    Note that $\eta_1+\eta_2=d_1+d_2+1$ and $\eta_1\eta_2=d_1d_2+t_{vw}$.
    Let
    \[
        w_1=\left[\begin{array}{c}
            -\eta_1+d_2+1\\
            \eta_1-d_1-1 \\\hline
            \mathbf{1}\\\hline
            -\mathbf{1}\\\hline
            \mathbf{0}\\\hline
            2\cdot\mathbf{1}\\\hline
            \mathbf{0}
        \end{array}\right]
        \text{\qquad and\qquad}
        w_2=\left[\begin{array}{c}
            -\eta_2+d_2+1\\
            \eta_2-d_1-1 \\\hline
            \mathbf{1}\\\hline
            -\mathbf{1}\\\hline
            \mathbf{0}\\\hline
            2\cdot\mathbf{1}\\\hline
            \mathbf{0}
        \end{array}\right].
    \]
    Since~\eqref{eq:main 1}--\eqref{eq:main 5} hold, by direct computations, one can see that for each $i\in[2]$, $w_i$ is an $\eta_i$-eigenvector of $L(G,\Sigma)$, and that $w_i-(e_1-e_2)$ is an $(\eta_i+1)$-eigenvector of $L(G+vw,\Sigma)=L(G,\Sigma)+(e_1-e_2)(e_1-e_2)^t$.
    Therefore, spectral integral variation of type~2 occurs under the addition of an even edge $vw$.
    
    Conversely, suppose that spectral integral variation of type~2 occurs under the addition of an even edge~$vw$ by increasing $\lambda_1,\lambda_2$ of~$L$.
    By~\Cref{prop:siv2 eigenvectors} and~\Cref{le_siv2_final}, there is an orthogonal basis of eigenvectors $v_1,\ldots,v_n$ of $L(G,\Sigma)$ such that for each $i\in[2]$, $v_i$ is a $\lambda_i$-eigenvector of the form in~\eqref{eigenvectors}.
    Since $v_i$ is a $\lambda_i$-eigenvector, by direct computations, we deduce that~\eqref{eq:main 1}--\eqref{eq:main 5} hold.
\end{proof}

As a corollary, we characterize when spectral integral variation of type~2 occurs under the addition of an odd edge.

\begin{corollary}
    Let $(G,\Sigma)$ be a signed graph and $v$ and $w$ be non-adjacent vertices of~$G$ with degrees~$d_1$ and~$d_2$, respectively.
    If $L(G,\Sigma)$ is given as~\eqref{eq:centered}, then spectral integral variation of type~2 occurs under the addition of an odd edge $vw$ if and only if the following conditions hold:
    \begin{align*}
        L_{11}\mathbf{1}+L_{12}\mathbf{1}+2L_{13}\mathbf{1}&=(d_2+1)\mathbf{1},\\
        L_{21}\mathbf{1}+L_{22}\mathbf{1}+2L_{23}\mathbf{1}&=(d_1+1)\mathbf{1},\\
        L_{31}\mathbf{1}+L_{32}\mathbf{1}+2L_{33}\mathbf{1}&=(d_1+d_2+2)\mathbf{1},\\
        L_{41}\mathbf{1}+L_{42}\mathbf{1}+2L_{43}\mathbf{1}&=-(d_1-d_2)\mathbf{1},\\
        L_{51}\mathbf{1}+L_{52}\mathbf{1}+2L_{53}\mathbf{1}&=\mathbf{0}.
    \end{align*}
\end{corollary}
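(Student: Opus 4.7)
The plan is to reduce the odd-edge case to~\Cref{thm:main2 even} by two rounds of switching. I would let $(G,\Sigma')$ be the signed graph obtained from $(G,\Sigma)$ by first switching at~$w$ and then switching at every vertex in $N_G(w)-N_G(v)$. The switch at~$w$ converts an odd edge~$vw$ into an even edge, so $(G+vw,\Sigma')$ is switching equivalent to $(G+vw,\Sigma\cup\{vw\})$; and the subsequent switches never touch~$v$, so a direct check using~\Cref{obs:centered} shows that the only odd neighbors of~$w$ in $\Sigma'$ are the vertices of the third block of~\eqref{eq:centered}, which lie in $N_G(v)$. Consequently $(G,\Sigma')$ is $(v,w)$-centered, and spectral integral variation of type~2 occurs under the odd edge~$vw$ in $(G,\Sigma)$ if and only if it occurs under the even edge~$vw$ in $(G,\Sigma')$, to which~\Cref{thm:main2 even} applies.

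The next step is to express the blocks $L'_{ij}$ of $L(G,\Sigma')$, arranged in the form~\eqref{eq:centered}, in terms of the original blocks~$L_{ij}$. The roles of blocks~$3$ and~$4$ are interchanged, since what was $N^+(v)\cap N^-(w)$ becomes $N^+(v)\cap N^+(w)$ after switching at~$w$ and vice versa. The second round of switching flips precisely those edges with exactly one endpoint in block~$2$, while edges internal to block~$2$ are flipped twice and edges avoiding block~$2$ are left untouched. Writing $\sigma$ for the transposition $(3\ 4)$, this yields the clean dictionary $L'_{ij}=\varepsilon_{ij}L_{\sigma(i)\sigma(j)}$, where $\varepsilon_{ij}=-1$ exactly when exactly one of $i,j$ equals~$2$. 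The ``$v$'' and ``$w$'' rows of $L(G,\Sigma')$ also fit the template of~\eqref{eq:centered}, which one verifies by tracking how each of the five entry patterns is modified by the two switchings.

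Finally, I would substitute this dictionary into each of~\eqref{eq:main 1}--\eqref{eq:main 5} applied to $(G,\Sigma')$. The permutation~$\sigma$ replaces every $L_{i4}$ by $L_{i3}$ and swaps the right-hand sides of the block-$3$ and block-$4$ equations, while the sign rule~$\varepsilon_{ij}$ turns every $-L_{i2}\mathbf{1}$ into $+L_{i2}\mathbf{1}$; after multiplying the block-$2$ equation by $-1$ to clear the overall sign, the system matches the five equations of the corollary exactly. The main obstacle is the sign bookkeeping in the middle step: one has to verify carefully that the two layers of switching combine into precisely the pattern~$\varepsilon_{ij}$ and~$\sigma$ claimed above, since any missed sign flip or misplaced block would break the translation.
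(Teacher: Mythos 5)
Your proposal is correct and follows essentially the same route as the paper: the paper also switches at $\{w\}\cup(N_G(w)-N_G(v))$ (the order being immaterial), writes out the resulting signed Laplacian in the form \eqref{eq:centered} with blocks $3$ and $4$ interchanged and the block-$2$ rows and columns negated --- exactly your dictionary $L'_{ij}=\varepsilon_{ij}L_{\sigma(i)\sigma(j)}$ --- and then invokes \Cref{thm:main2 even}. Your sign and permutation bookkeeping checks out against the paper's explicit matrix, so there is no gap.
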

\begin{proof}
    Let $(G,\Sigma')$ be the signed graph obtained from~$(G,\Sigma)$ by switchings at the vertices in $(N(w)-N(v))\cup\{w\}$, in any order.
    Note that $(G+vw,\Sigma')$ is equivalent to $(G+vw,\Sigma\cup\{vw\})$ via the same switchings.
    Thus, spectral integral variation of type~2 occurs in~$(G,\Sigma)$ under the addition of an odd edge~$vw$ if and only if that occurs in $(G,\Sigma')$ under the addition of an even edge $vw$.
    Note that $L(G,\Sigma')$ can be arranged as
    \[
        \left[\begin{array}{rr|r|r|r|r|r}
            d_1 & 0 & -\mathbf{1}^t & \mathbf{0}^t & -\mathbf{1}^t & -\mathbf{1}^t & \mathbf{0}^t\\
            0 & d_2 & \mathbf{0}^t & -\mathbf{1}^t & -\mathbf{1}^t & \mathbf{1}^t & \mathbf{0}^t\\\hline
            -\mathbf{1} & \mathbf{0} & L_{11} & -L_{12} & L_{14} & L_{13} & L_{15}\\\hline
            \mathbf{0} & -\mathbf{1} & -L_{21} & L_{22} & -L_{24} & -L_{23} & -L_{25}\\\hline
            -\mathbf{1} & -\mathbf{1} & L_{41} & -L_{42} & L_{44} & L_{43} & L_{45}\\\hline 
            -\mathbf{1} & \mathbf{1} & L_{31} & -L_{32} & L_{34} & L_{33} & L_{35}\\\hline
            \mathbf{0} & \mathbf{0} & L_{51} & -L_{52} & L_{54} & L_{53} & L_{55}
        \end{array}\right].
    \]
    Then the statement follows from~\Cref{thm:main2 even}.
\end{proof}

\section{Integrally $\Sigma$-completable signed graphs}\label{sec:completable}

In this section, we prove~\Cref{thm:main3}.
Here is our strategy.
In~\Cref{subsec:structure}, we show that for a signed graph $(K_n,\Sigma)$ with $n\geq4$, the graph on $V(K_n)$ with edge set $X(\Sigma)\cup Y(\Sigma)$ consists of complete components.
Based on this structure, in~\Cref{subsec:substitution}, we introduce a vertex substitution operation and analyze the spectrum of the signed graph obtained by this operation.
With this spectrum analysis, in~\Cref{subsec:siv}, we characterize circumstances when we can add a new edge not in~$X(\Sigma)$ to an integrally $\Sigma$-completable graph with spectral integral variation.
Finally, in~\Cref{subsec:main proof}, we complete the proof of~\Cref{thm:main3} using this characterization.

\subsection{Structures of $X(\Sigma)$ and $Y(\Sigma)$}\label{subsec:structure}

We will use the following simple observation often in several proofs.
The first one is set-theoretically obvious, and the second one directly follows from the parity conditions of $X(\Sigma)$ and $Y(\Sigma)$.

\begin{observation}\label{obs:partition}
    For a signed graph $(G,\Sigma)$ and subsets~$A$ and~$B$ of~$E(G)$, $\abs{\Sigma\cap A}$ is even if and only if $\abs{\Sigma\cap B}\equiv\abs{\Sigma\cap(A\triangle B)}\pmod{2}$.    
    In particular, if $G=K_4$ with vertex set $\{v_1,\ldots,v_4\}$, then $v_1v_2v_3$ and $v_1v_2v_4$ have the same parity if and only if $v_1v_3v_4$ and $v_2v_3v_4$ have the same parity.
\end{observation}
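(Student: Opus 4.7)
The plan is to handle the first assertion by a straightforward inclusion–exclusion count modulo~$2$, and then to deduce the $K_4$ consequence by applying the first assertion to two specific pairs of triangles whose symmetric differences coincide.

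For the first statement, the key identity I will use is the disjoint decomposition
\begin{equation*}
    \Sigma\cap A=(\Sigma\cap A\cap B)\sqcup(\Sigma\cap(A\setminus B)),\qquad \Sigma\cap B=(\Sigma\cap A\cap B)\sqcup(\Sigma\cap(B\setminus A)),
\end{equation*}
together with $\Sigma\cap(A\triangle B)=(\Sigma\cap(A\setminus B))\sqcup(\Sigma\cap(B\setminus A))$. Adding the cardinalities of the first two and reducing modulo~$2$, the common term $2\abs{\Sigma\cap A\cap B}$ vanishes and we are left with
\begin{equation*}
    \abs{\Sigma\cap A}+\abs{\Sigma\cap B}\equiv\abs{\Sigma\cap(A\triangle B)}\pmod{2}.
\end{equation*}
From this congruence, $\abs{\Sigma\cap A}$ is even if and only if $\abs{\Sigma\cap B}$ and $\abs{\Sigma\cap(A\triangle B)}$ share the same parity, which is exactly the claimed equivalence.

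For the second statement on $K_4$, I will set $A$ and $B$ to be the edge sets of the triangles $v_1v_2v_3$ and $v_1v_2v_4$, and $C$ and $D$ to be the edge sets of $v_1v_3v_4$ and $v_2v_3v_4$. A direct computation gives
\begin{equation*}
    A\triangle B=\{v_1v_3,v_2v_3,v_1v_4,v_2v_4\}=C\triangle D,
\end{equation*}
since the edge $v_1v_2$ is shared by $A$ and $B$, the edge $v_3v_4$ is shared by $C$ and $D$, and the four mixed edges appear in exactly one side of each pair. Applying the first assertion to the pair $(A,B)$ and then to the pair $(C,D)$, the statement that $v_1v_2v_3$ and $v_1v_2v_4$ have the same parity is equivalent to $\abs{\Sigma\cap(A\triangle B)}$ being even, which in turn is equivalent to $v_1v_3v_4$ and $v_2v_3v_4$ having the same parity.

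There is really no obstacle here; the only mild subtlety is making sure the decompositions in the first part are genuinely disjoint, which is immediate from the definitions, and verifying the symmetric-difference identity $A\triangle B=C\triangle D$ on $K_4$, which is a finite check on six edges. I will present the calculation in the first paragraph in full and then summarize the $K_4$ consequence by exhibiting $A\triangle B=C\triangle D$ explicitly.
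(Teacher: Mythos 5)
Your proof is correct: the mod-2 identity $\abs{\Sigma\cap A}+\abs{\Sigma\cap B}\equiv\abs{\Sigma\cap(A\triangle B)}\pmod 2$ and the check that the two pairs of triangles in $K_4$ have the same symmetric difference are exactly the intended content. The paper offers no written proof, dismissing the claim as set-theoretically obvious, so your argument simply fills in those details along the only natural route.
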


\begin{observation}\label{obs:no common}
    For a signed graph $(K_n,\Sigma)$ with $n\geq4$, $K_n$ has no vertex incident with an edge in~$X(\Sigma)$ and an edge in~$Y(\Sigma)$.
\end{observation}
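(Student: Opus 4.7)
The plan is to prove the observation by direct contradiction, reading off the parity of a single triangle from two incompatible membership conditions. First I would suppose, for contradiction, that a vertex $v$ of $K_n$ is incident with an edge $vw_1 \in X(\Sigma)$ and an edge $vw_2 \in Y(\Sigma)$. A quick preliminary check shows $w_1 \neq w_2$: otherwise the single edge $vw_1 = vw_2$ would have to force every extending triangle to be simultaneously even (by the defining clause of $X(\Sigma)$) and odd (by the defining clause of $Y(\Sigma)$), which is impossible since $n \geq 4$ supplies at least one such third vertex.

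With $w_1$ and $w_2$ distinct, the triangle $vw_1w_2$ lives inside $K_n$, and I would then apply each of the two membership conditions to this very triangle. Taking $w_2$ as the third vertex extending the edge $vw_1 \in X(\Sigma)$, the defining clause forces $vw_1w_2$ to be even; taking $w_1$ as the third vertex extending the edge $vw_2 \in Y(\Sigma)$, the defining clause forces the same triangle to be odd. These two parities contradict each other, completing the proof.

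I do not anticipate any real obstacle. The statement is essentially an immediate consequence of the parity clauses in the definitions of $X(\Sigma)$ and $Y(\Sigma)$, which is presumably why the authors phrase it as an observation rather than a lemma. In particular, the more delicate second clause in the definition of $Y(\Sigma)$, which compares the counts of odd and even triangles through each pair $u,v$, plays no role in this argument and can be ignored; only the "$uvw$ is odd for every $u$" part is needed.
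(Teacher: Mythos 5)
Your proof is correct and matches the paper's intent: the authors do not write out a proof, stating only that the observation ``directly follows from the parity conditions of $X(\Sigma)$ and $Y(\Sigma)$,'' and your argument---reading the parity of the single triangle $vw_1w_2$ off both defining clauses to get a contradiction, with the degenerate case $w_1=w_2$ handled separately---is precisely that argument. You are also right that the second, counting clause in the definition of $Y(\Sigma)$ plays no role here.
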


We show that $X(\Sigma)$ induces a subgraph of~$K_n$ where every component is complete.

\begin{lemma}\label{lem:friendly} 
    For a signed graph $(K_n,\Sigma)$ with $n\geq4$, the graph on $V(K_n)$ with edge set $X(\Sigma)$ consists of complete components.
\end{lemma}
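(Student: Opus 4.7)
The plan is to reduce the lemma to a triangle-completion property for $X(\Sigma)$: whenever $v_1v_2,v_2v_3\in X(\Sigma)$, the chord $v_1v_3$ also lies in $X(\Sigma)$. Granting this, the conclusion is immediate by induction on the length of a path inside a component of the graph $(V(K_n),X(\Sigma))$. Indeed, for vertices~$v$ and~$w$ in the same component, fix a path $v=u_0u_1\cdots u_k=w$ whose consecutive edges lie in $X(\Sigma)$; the base case $k=1$ is trivial, and the inductive step applies triangle-completion to $u_0u_{k-1},u_{k-1}u_k\in X(\Sigma)$ to conclude $u_0u_k\in X(\Sigma)$. Thus every two vertices in a common component are joined by an edge of~$X(\Sigma)$, i.e., each component is complete.

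To establish triangle-completion, I fix $v_1v_2,v_2v_3\in X(\Sigma)$ and check that $uv_1v_3$ is even for every $u\in V(K_n)-\{v_1,v_3\}$. The case $u=v_2$ is handled directly by the definition of $X(\Sigma)$ applied to $v_1v_2$ with external vertex~$v_3$: the triangle $v_3v_1v_2=v_1v_2v_3$ is even. For $u\in V(K_n)-\{v_1,v_2,v_3\}$, I restrict to the $K_4$ on $\{u,v_1,v_2,v_3\}$ and invoke \Cref{obs:partition}. The two triangles $uv_1v_2$ and $uv_2v_3$ share the edge $uv_2$ and are both even by the assumptions $v_1v_2,v_2v_3\in X(\Sigma)$; the remaining two triangles of this $K_4$ share the edge $v_1v_3$ and are $uv_1v_3$ and $v_1v_2v_3$. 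Since the first pair have equal parity, \Cref{obs:partition} forces $uv_1v_3$ and $v_1v_2v_3$ to have equal parity as well. Combined with the already-verified fact that $v_1v_2v_3$ is even, this yields $uv_1v_3$ even, completing the proof of triangle-completion.

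The only step with any real content is the $K_4$ parity manipulation, and this is essentially just correct bookkeeping of which two edges of the $K_4$ pair off the four triangles in \Cref{obs:partition}. I do not foresee any obstacle: the proof reduces to a single application of \Cref{obs:partition} together with a one-line induction.
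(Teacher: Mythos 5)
Your proof is correct and follows essentially the same route as the paper: both reduce the lemma to the transitivity statement that $v_1v_2,v_2v_3\in X(\Sigma)$ implies $v_1v_3\in X(\Sigma)$, and both verify it by the same $K_4$ parity argument (the paper deduces that $uv_1v_3$ is even from the evenness of the other three triangles of the $K_4$, which is exactly your application of \Cref{obs:partition}). The only difference is presentational: you spell out the induction along a path and the invocation of \Cref{obs:partition}, which the paper leaves implicit.
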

\begin{proof}
    It suffices to show that for each distinct~$uv$ and~$vw$ in~$X(\Sigma)$, $uw$ is also contained in $X(\Sigma)$.
    Since $uv\in X(\Sigma)$, $uvw$ is even.
    For every $x\in V(K_n)-\{u,v,w\}$, $uwx$ is even as each of $uvw$, $uvx$, and $vwx$ is even.
    Hence, $uw\in X(\Sigma)$.
\end{proof}

We show that $Y(\Sigma)$ contains at most one edge.

\begin{lemma}\label{lem:suspicious}
    For a signed graph $(K_n,\Sigma)$ with $n\geq4$, $\abs{Y(\Sigma)}\leq1$.
\end{lemma}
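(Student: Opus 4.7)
My plan is to argue by contradiction. Suppose $|Y(\Sigma)|\ge 2$ and fix two distinct edges $e_1,e_2\in Y(\Sigma)$; I will split into two cases according to whether $e_1$ and $e_2$ share a vertex.

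In the shared-vertex case, say $e_1=vw$ and $e_2=vz$ with $w\ne z$, a direct contradiction drops out quickly: the parity condition in $vw\in Y(\Sigma)$ makes every one of the $n-2$ triangles through the edge $vw$ odd, whereas the count condition in $vz\in Y(\Sigma)$ applied at $u=w$ (valid since $w\ne v,z$) says that the number of odd triangles through $vw$ equals $(n-1)/2$. Equating yields $n=3$, contradicting $n\ge 4$.

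For the disjoint case $e_1=vw$ and $e_2=xy$, I will first normalize via switching (using invariance of triangle parities, hence of $Y(\Sigma)$) so that every edge at $v$ is even; in particular $vw$ is even, and the parity condition for $vw\in Y(\Sigma)$ then forces every other edge incident to $w$ to be odd. Let $H$ be the spanning subgraph on $V':=V(K_n)-\{v,w\}$ consisting of the odd edges within $V'$. Translating the count condition for $vw$ into this normalized picture shows that $H$ is $(n-3)/2$-regular on $n-2$ vertices, which (as $n$ must be odd) forces $n\equiv 3\pmod 4$ by handshaking. Next, the parity condition for $xy\in Y(\Sigma)$ applied at $u=v$ and at each $u=z_0\in V'-\{x,y\}$ shows that $xy$ is odd (so $xy\in E(H)$) and that, for every $z_0\in V'-\{x,y\}$, the edges $xz_0$ and $yz_0$ have the same parity; hence $x$ and $y$ have the same $H$-neighbors inside $V'-\{x,y\}$.

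The decisive step is to apply the count condition from $xy\in Y(\Sigma)$ at each $u=z_0\in V'-\{x,y\}$: the number of odd triangles through $xz_0$ must equal $(n-1)/2$. I will decompose this count by the third vertex, separating $r\in\{v,w,y\}$ from $r\in V'-\{x,y,z_0\}$, and use the normalization together with the twin property of $x$ and $y$ to express the total in terms of the parity of $xz_0$ and the quantity $a:=|N_H(x)\cap N_H(z_0)\cap(V'-\{x,y,z_0\})|$. A short inclusion-exclusion then yields $a=(n-11)/4$ if $xz_0$ is odd and $a=(n-5)/4$ if $xz_0$ is even; since $n\equiv 3\pmod 4$ the latter is not an integer, forcing $xz_0$ to be odd for every $z_0\in V'-\{x,y\}$. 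Combined with $xy\in E(H)$, this gives $\deg_H(x)\ge n-3$, contradicting $\deg_H(x)=(n-3)/2$ for $n\ge 4$. The main obstacle will be the bookkeeping in this inclusion-exclusion, correctly tracking whether each of $x,y,z_0$ lies in $N_H(x)$ or $N_H(z_0)$ when restricted to $V'-\{x,y,z_0\}$; the structural insight powering the contradiction is the modular mismatch between $n\equiv 3\pmod 4$ (forced by the regularity of $H$) and the divisibility required of $a$ when $xz_0$ is even.
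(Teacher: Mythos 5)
Your proof is correct. Its skeleton matches the paper's --- argue by contradiction, observe that the count condition forces $n$ odd, split on whether the two edges of $Y(\Sigma)$ share an endpoint, and kill the shared case by clashing the parity condition of one edge (all $n-2$ triangles through it are odd) against the count condition of the other (which forces $(n-3)/2\geq 1$ of those triangles to be even) --- but your handling of the disjoint case, which is where all the work lies, is genuinely different. The paper partitions $V(K_n)-\{v,w,x,y\}$ into $S_1,S_2$ by the parity of the triangle on $u$ and the two left endpoints, derives $\abs{S_2}=\abs{S_1}+1$, treats $n=5$ separately, and then shows via a double-counting with auxiliary sets $T_1,T_2$ that both $\abs{S_1}$ and $\abs{S_2}$ are even; the contradiction is purely mod $2$. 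You instead switch so that $v$ sees only even edges (legitimate, since switching preserves triangle parities and hence $Y(\Sigma)$), deduce that the odd-edge graph $H$ on $V(K_n)-\{v,w\}$ is $(n-3)/2$-regular, extract $n\equiv3\pmod4$ from the handshake lemma, note that $xy\in E(H)$ and that $x,y$ are twins in $H$, and then run the count condition of $xy$ at a third vertex $z_0$ to obtain a mod-$4$ obstruction. I verified your inclusion--exclusion: with $S=V(K_n)-\{v,w,x,y,z_0\}$ one gets $\abs{N_H(x)\cap S}=(n-7)/2$ or $(n-5)/2$ and $\abs{N_H(z_0)\cap S}=(n-7)/2$ or $(n-3)/2$ according to the parity of $xz_0$, giving exactly your values $a=(n-11)/4$ and $a=(n-5)/4$; the latter is indeed non-integral when $n\equiv3\pmod4$, so $x$ would have to be $H$-adjacent to all of $V(K_n)-\{v,w,x\}$, contradicting $\deg_H(x)=(n-3)/2$. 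Your route buys a cleaner structural picture (regularity of $H$, the twin relation, the extra fact $n\equiv3\pmod4$) and absorbs the paper's separate $n=5$ case into the handshake step, at the cost of the switching normalization and heavier arithmetic bookkeeping; the paper's argument never leaves the level of raw triangle parities and needs only mod-$2$ information. One line worth adding when you write this up: the count condition in the definition of $Y(\Sigma)$ names only one endpoint of the edge, and you invoke it at whichever endpoint is convenient; given the parity condition it is indeed symmetric (for each third vertex the triangles through $u,v$ and through $u,w$ have equal parity), but that deserves an explicit sentence.
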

\begin{proof}
    Suppose for contradiction that $Y(\Sigma)$ contains distinct edges~$v_1w_1$ and~$v_2w_2$.
    Without loss of generality, we may assume that $w_2\notin\{v_1,w_1\}$.
    If~$n$ is even, then the number of odd triangles containing~$w_1$ and~$w_2$ has the same parity with the number of even triangles containing them, contradicting that $v_1w_1\in Y(\Sigma)$.
    Hence,~$n$ is odd.

    We show that~$v_1w_1$ and $v_2w_2$ have no common end.
    Suppose for contradiction that $v_2\in\{v_1,w_1\}$.
    Without loss of generality, we may assume that $v_1=v_2$.
    Since $v_1w_1\in Y(\Sigma)$, $w_2\notin\{v_1,w_1\}$, and $n\geq4$, there are at least $\lfloor(n-2)/2\rfloor\geq1$ even triangles containing~$v_1$ and~$w_2$, contradicting that $v_1w_2=v_2w_2\in Y(\Sigma)$.
    Hence, $v_1w_1$ and $v_2w_2$ have no common end.

    Let $S_1$ be the set of vertices $u\in V(K_n)-\{v_1,v_2,w_1,w_2\}$ such that $uv_1v_2$ is odd, and let $S_2:=V(K_n)-(S_1\cup\{v_1,v_2,w_1,w_2\})$.
    Since $v_1w_1$ and $v_2w_2$ are edges in $Y(\Sigma)$ having no common end, both $v_1v_2w_1$ and $v_1v_2w_2$ are odd triangles, and therefore $\abs{S_2}=\abs{S_1}+1$ by the definition of~$Y(\Sigma)$.

    We first assume that $n=5$.
    Note that~$S_2$ is a singleton and~$S_1$ is empty.
    Let $u$ be the unique vertex in~$S_2$.
    Since $uv_1w_1$ is odd and $uv_1v_2$ is even, by the definition of~$Y(\Sigma)$, $uv_1w_2$ is odd.
    Since $uv_2w_2$ is odd, by~\Cref{obs:partition}, $v_1v_2w_2$ is even, contradicting that $v_2w_2\in Y(\Sigma)$.

    Hence, we may assume that $n\geq7$.
    Note that both~$S_1$ and~$S_2$ are non-empty.
    Since $\abs{S_2}=\abs{S_1}+1$, to derive a contradiction, it suffices to show that both~$\abs{S_1}$ and~$\abs{S_2}$ are even.
    
    We fix $i\in[2]$ and show that~$\abs{S_i}$ is even.
    Since $S_{3-i}$ is non-empty, we can choose a vertex $u\in S_{3-i}$.
    For each $j\in[2]$, let $T_j$ be the set of vertices $u'\in V(K_n)-\{u,v_j\}$ such that $uu'v_j$ is even, and let $T'_j:=V(K_n)-(T_j\cup\{u,v_j\})$.
    By the definition of~$Y(\Sigma)$, $\abs{T_1}=\abs{T_2}=(n-3)/2$.
    Note that $(T_1\cap S_i,T'_1\cap S_i)$ is a partition of $S_i$.
    Thus, to show that $\abs{S_i}$ is even, it suffices to show that $\abs{T_1\cap S_i}\equiv\abs{T'_1\cap S_i}\pmod{2}$.

    For each $j\in[2]$, since both $uv_jw_j$ and $v_1v_2w_j$ are odd, by~\Cref{obs:partition}, $uv_1v_2$ and $uv_{3-j}w_j$ have the same parity.
    Thus, either $\{v_j,w_j\}\subseteq T_{3-j}$ or $\{v_j,w_j\}\subseteq T'_{3-j}$.
    Since $w_{3-j}\in T'_{3-j}$, if $\{v_j,w_j\}\subseteq T_{3-j}$, then $\abs{T_{3-j}}\equiv\abs{T_{3-j}\cap S_1}+\abs{T_{3-j}\cap S_2}\pmod{2}$, and otherwise $\abs{T_{3-j}}\equiv\abs{T_{3-j}\cap S_1}+\abs{T_{3-j}\cap S_2}+1\pmod{2}$.
    Since $\abs{T_1}=\abs{T_2}$, in both cases, we deduce that
    \begin{equation}
        \abs{T_1\cap S_1}+\abs{T_1\cap S_2}\equiv\abs{T_2\cap S_1}+\abs{T_2\cap S_2}\pmod{2}.\label{eq:partition parity}
    \end{equation}

    For every $u'\in S_{3-i}-\{u\}$, since $uv_1v_2$ and $u'v_1v_2$ have the same parity, by~\Cref{obs:partition}, $u'\in T_1$ if and only if $u'\in T_2$.
    It implies that $T_1\cap S_{3-i}=T_2\cap S_{3-i}$, so by~\eqref{eq:partition parity}, $\abs{T_1\cap S_i}\equiv\abs{T_2\cap S_i}\pmod{2}$.
    On the other hand, for every $u''\in S_i$, $uv_1v_2$ and $u''v_1v_2$ have opposite parity.
    Thus, for each $j\in[2]$, by~\Cref{obs:partition}, $u''\in T_j$ if and only if $u''\in T'_{3-j}$, which implies that $T_2\cap S_i=T'_1\cap S_i$.
    Hence, $\abs{T_1\cap S_i}\equiv\abs{T'_1\cap S_i}\pmod{2}$, and therefore $\abs{S_i}$ is even.
    This completes the proof.
\end{proof}

By combining~\Cref{obs:no common} and Lemmas~\ref{lem:friendly}--\ref{lem:suspicious}, we deduce that the graph on $V(K_n)$ with edge set $X(\Sigma)\cup Y(\Sigma)$ consists of complete components.

In the following lemma, we investigate the effects of replacing the sign~$\Sigma$ with~$\Sigma\triangle Y(\Sigma)$.

\begin{lemma}\label{lem:swapping}
    For a signed graph $(K_n,\Sigma)$ with $n\geq4$, if $Y(\Sigma)$ is non-empty, then $X(\Sigma)\cup Y(\Sigma)=X(\Sigma\triangle Y(\Sigma))$ and $Y(\Sigma\triangle Y(\Sigma))=\emptyset$.
\end{lemma}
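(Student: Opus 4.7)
My starting point is~\Cref{lem:suspicious}: since $Y(\Sigma)$ is nonempty and has at most one edge, I may write $Y(\Sigma)=\{vw\}$, so that $\Sigma\triangle Y(\Sigma)=\Sigma\triangle\{vw\}$. The entire argument hinges on one parity observation: for any triangle $T$ of $K_n$, the parities of $T$ in $\Sigma$ and in $\Sigma\triangle\{vw\}$ differ if and only if $T$ contains the edge $vw$, equivalently, if and only if both $v$ and $w$ lie in $V(T)$.

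For the equality $X(\Sigma)\cup Y(\Sigma)=X(\Sigma\triangle\{vw\})$, the forward inclusion is immediate from this observation: every triangle through $vw$ is odd in $\Sigma$ and becomes even in $\Sigma\triangle\{vw\}$, so $vw\in X(\Sigma\triangle\{vw\})$; and any $e\in X(\Sigma)$ is vertex-disjoint from $vw$ by~\Cref{obs:no common}, so no triangle through $e$ contains $vw$ and $e$ stays in $X$ after the switch. For the reverse inclusion, I fix $e\in X(\Sigma\triangle\{vw\})$ and split by how $e$ meets $\{v,w\}$. The cases $e=vw$ and $e$ vertex-disjoint from $vw$ immediately give $e\in Y(\Sigma)$ and $e\in X(\Sigma)$, respectively. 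The only nontrivial case is $e$ sharing exactly one vertex with $vw$; by the symmetric roles of $v$ and $w$ in the definition of $Y$, I may assume $e=vx$ with $x\notin\{v,w\}$. The unique triangle through $e$ whose parity flips is $vwx$, so in $\Sigma$ the triangle $vwx$ is odd while $uvx$ is even for every $u\notin\{v,w,x\}$. Applying the counting condition from $vw\in Y(\Sigma)$ at $u=x$, the difference between the numbers of odd and even triangles containing $x$ and $v$ in $\Sigma$ equals $1-(n-3)=4-n$, which contradicts the required value $1$ whenever $n\geq4$.

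For $Y(\Sigma\triangle\{vw\})=\emptyset$, I suppose for contradiction that $v'w'\in Y(\Sigma\triangle\{vw\})$ and argue by cases on $\{v',w'\}\cap\{v,w\}$. If the edges coincide, every $uvw$ is odd in $\Sigma$ and hence even in $\Sigma\triangle\{vw\}$, contradicting the all-odd requirement of $v'w'=vw$ in $\Sigma\triangle\{vw\}$. If they share exactly one vertex, by symmetry I may assume $v'=v$ and $w'\neq w$: applying the all-odd condition of $v'w'$ at $u=w$ forces $wvw'$ odd in $\Sigma\triangle\{vw\}$ and hence even in $\Sigma$, contradicting the all-odd condition of $vw\in Y(\Sigma)$ at $u=w'$. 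Finally, if $\{v',w'\}$ is disjoint from $\{v,w\}$, I compare the counting conditions on the common family of triangles $\{vv'x:x\notin\{v,v'\}\}$: the condition of $vw\in Y(\Sigma)$ at $u=v'$ gives ``odd minus even'' equal to $1$ in $\Sigma$, while the condition of $v'w'\in Y(\Sigma\triangle\{vw\})$ at $u=v$ gives the same quantity equal to $1$ in $\Sigma\triangle\{vw\}$. Yet only the triangle $vv'w$ in this family has flipped parity, and by the all-odd condition of $vw$ at $u=v'$ it is odd in $\Sigma$, so the switch shifts the difference by $-2$, yielding $1=-1$.

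The step I expect to be the main obstacle is the vertex-disjoint case of the second equality: it requires pinpointing a pair of counting conditions that apply to the same family of triangles in two different signs, isolating the single triangle of that family whose parity flips, and tracking the sign of the resulting shift. The remaining cases rely only on the all-odd part of the $Y$-condition together with the elementary parity observation from the opening paragraph.
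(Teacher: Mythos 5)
Your proof is correct and takes essentially the same route as the paper's: the forward inclusion via the vertex-disjointness of $X(\Sigma)$- and $Y(\Sigma)$-edges, the reverse inclusion via the same $1-(n-3)$ counting contradiction at the shared endpoint, and the emptiness of $Y(\Sigma\triangle Y(\Sigma))$ by tracking the single flipped triangle $vv'w$ against the two counting conditions. The only cosmetic difference is in the last part, where the paper dismisses the cases in which $vw$ and $v'w'$ share a vertex in one stroke by combining $vw\in X(\Sigma\triangle Y(\Sigma))$ with \Cref{obs:no common}, whereas you handle them by direct case analysis.
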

\begin{proof}
    Let $vw$ be an edge in~$Y(\Sigma)$, which is unique by~\Cref{lem:suspicious}, and let $\Sigma':=\Sigma\triangle\{vw\}$.
    We first show that $X(\Sigma)\cup Y(\Sigma)=X(\Sigma\triangle Y(\Sigma))$.
    For every edge $v'w'\in X(\Sigma)$ and every $x\in V(K_n)-\{v',w'\}$, by~\Cref{obs:no common}, $v'w'x$ has the same parity in~$(K_n,\Sigma)$ and~$(K_n,\Sigma')$, and therefore $v'w'\in X(\Sigma')$.
    On the other hand, since every triangle in $(K_n,\Sigma)$ containing~$v$ and~$w$ is odd, $vw$ is contained in $X(\Sigma')$.
    Therefore, $X(\Sigma)\cup Y(\Sigma)\subseteq X(\Sigma\triangle Y(\Sigma))$.

    Suppose for contradiction that $X(\Sigma\triangle Y(\Sigma))-(X(\Sigma)\cup Y(\Sigma))$ contains an edge $v'w'$.
    If~$vw$ and~$v'w'$ do not share a common end, then $v'w'\in X(\Sigma\triangle\{vw\})$ if and only if $v'w'\in X(\Sigma)$, contradicting that $v'w'\notin X(\Sigma)$.
    Thus, we may assume that $v=v'$.
    Since every triangle containing~$v$ and~$w$ is even in $(K_n,\Sigma\triangle\{vw\})$, $vww'$ is the only odd triangle containing~$v$ and~$w'$ in $(K_n,\Sigma)$, contradicting that $vw\in Y(\Sigma)$.
    Hence, $X(\Sigma)\cup Y(\Sigma)=X(\Sigma\triangle Y(\Sigma))$.

    We now show that $Y(\Sigma')$ is empty.
    Suppose for contradiction that it contains an edge $v'w'$.
    Recall that $vw\in X(\Sigma')$.
    Thus, by~\Cref{obs:no common}, $vw$ and~$v'w'$ have no common end.
    By the definition of~$Y(\Sigma')$, the number of odd triangles in~$(K_n,\Sigma')$ containing~$v$ and~$v'$ is one more than that of even triangles in~$(K_n,\Sigma')$ containing~$v$ and~$v'$.
    Since $vv'w$ is even in~$(K_n,\Sigma')$ and odd in~$(K_n,\Sigma)$, the number of odd triangles in~$(K_n,\Sigma)$ containing~$v$ and~$v'$ is two more than that of even triangles in~$(K_n,\Sigma)$ containing~$v$ and~$v'$, contradicting that $vw\in Y(\Sigma)$.
\end{proof}

\subsection{Vertex substitution}\label{subsec:substitution}

Let $(G,\Sigma)$ and $(H,\Gamma)$ signed graphs with disjoint vertex sets.
For a vertex~$v$ of~$G$, \emph{substituting~$v$ with~$(H,\Gamma)$} is an operation of constructing a new signed graph $(G',\Sigma')$ by taking the disjoint union of~$(G-v,\Sigma-\delta(v))$ and~$(H,\Gamma)$ and adding new edges $\{uv':u\in V(H),v'\in N_G(v)\}$ such that for each $u\in V(H)$ and $v'\in N_G(v)$, the parity of~$uv'$ in $(G',\Sigma')$ is equal to that of~$vv'$ in $(G,\Sigma)$.
In other words,
\[
    \Sigma'=(\Sigma-\{vv':v'\in N_{G,\Sigma}^-(v)\})\cup\Gamma\cup\{uv':u\in V(H),v'\in N_{G,\Sigma}^-(v)\}.
\]

We analyze the spectrum of a signed graph obtained by the substitution operations as follows.

\begin{lemma}\label{lem:substitution}
    Let $(G,\Sigma)$ be the signed graph obtained from a signed graph $(G',\Sigma')$ on $\{v_1,\ldots,v_k\}$ by substituting $v_i$ with a signed graph $(G_i,\Sigma_i)$ for each $i\in[k]$.
    For each $i\in[k]$, let $n_i:=\abs{V(G_i)}$, let $m_i:=\sum_{v_j\in N_{G'}(v_i)}n_j$, and let $\{\lambda_{i,1},\ldots,\lambda_{i,n_i}\}$ be the spectrum of $L(G_i,\Sigma_i)$.
    Let $M:=M(G',\Sigma')$ be a $k\times k$ matrix such that for each~$i,j\in[k]$,
    \[
        M_{i,j}:=\begin{cases}
            \lambda_{i,n_i}+m_i &\text{if $i=j$,}\\
            n_j &\text{if $i\neq j$ and $v_iv_j \in \Sigma'$,}\\
            -n_j &\text{if $i\neq j$ and $v_iv_j \in E(G')-\Sigma'$, and}\\
            0 &\text{otherwise,}
        \end{cases}
    \]
    and let $\{\mu_1,\ldots,\mu_k\}$ be the spectrum of~$M$.
    If $\mathbf{1}_{n_i}$ is a $\lambda_{i,n_i}$-eigenvector of~$L(G_i,\Sigma_i)$ for every $i\in[k]$, then the spectrum of $(G,\Sigma)$ is
    \[
        \{\mu_i: i \in [k]\}\cup\bigcup_{i=1}^k\{\lambda_{i,j}+m_i:j\in[n_i-1]\}.
    \]
\end{lemma}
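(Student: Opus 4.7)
The plan is to diagonalize $L(G,\Sigma)$ by exhibiting $n := n_1+\cdots+n_k$ eigenvectors that realize the claimed spectrum. Partitioning $V(G)$ into $V(G_1),\ldots,V(G_k)$ puts $L(G,\Sigma)$ in block form: the $i$-th diagonal block is $L(G_i,\Sigma_i)+m_iI_{n_i}$ (the shift $m_i$ being the extra degree each vertex of $G_i$ accumulates from the vertices in neighboring substituted blocks), and for $i\neq j$ with $v_iv_j\in E(G')$ the $(i,j)$-off-diagonal block is $J_{n_i\times n_j}$ if $v_iv_j\in\Sigma'$ and $-J_{n_i\times n_j}$ if $v_iv_j\in E(G')-\Sigma'$; all other off-diagonal blocks vanish. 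These sign choices (opposite to those in $A(G,\Sigma)$ because $L=D-A$) are arranged precisely so that the off-diagonal action on block-constant vectors reproduces the off-diagonal entries of $M$.

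\textbf{First family of eigenvectors.} For each $i\in[k]$ and each $j\in[n_i-1]$, pick a $\lambda_{i,j}$-eigenvector $x_{i,j}$ of $L(G_i,\Sigma_i)$ orthogonal to $\mathbf{1}_{n_i}$, which is possible because $L(G_i,\Sigma_i)$ is symmetric and $\mathbf{1}_{n_i}$ is one of its eigenvectors. Embed $x_{i,j}$ into $\mathbb{R}^n$ by placing it in the $i$-th block and zeros elsewhere. Because $\mathbf{1}_{n_i}^\top x_{i,j}=0$, every off-diagonal block of $L(G,\Sigma)$ annihilates this embedded vector, while the $i$-th diagonal block scales it by $\lambda_{i,j}+m_i$, yielding an eigenvector for eigenvalue $\lambda_{i,j}+m_i$. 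This produces $\sum_{i=1}^{k}(n_i-1)=n-k$ such eigenvectors.

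\textbf{Second family.} For each eigenvalue $\mu_\ell$ of $M$ with eigenvector $c=(c_1,\ldots,c_k)^\top$, form $y_\ell\in\mathbb{R}^n$ whose $i$-th block is $c_i\mathbf{1}_{n_i}$. Using $L(G_i,\Sigma_i)\mathbf{1}_{n_i}=\lambda_{i,n_i}\mathbf{1}_{n_i}$ together with the off-diagonal contributions, the $i$-th block of $L(G,\Sigma)y_\ell$ becomes $(Mc)_i\mathbf{1}_{n_i}=\mu_\ell c_i\mathbf{1}_{n_i}$, so $y_\ell$ is a $\mu_\ell$-eigenvector. Although $M$ itself is not symmetric, conjugation by $\mathrm{diag}(\sqrt{n_1},\ldots,\sqrt{n_k})$ turns it into a real symmetric matrix, so $M$ admits $k$ linearly independent real eigenvectors. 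The two families together are linearly independent, since vectors from the first family sum to zero on every block whereas vectors from the second family are block-constant, so counting eigenvalues with multiplicity gives exactly the claimed multiset.

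The main obstacle is the sign bookkeeping: the $\pm$ signs in the off-diagonal blocks of $L(G,\Sigma)$, dictated by $\Sigma'$, must match the definition of the off-diagonal entries of $M$ so that $Mc$ really is reproduced by the off-diagonal action on a block-constant vector. Once this alignment is verified, the rest of the proof is a direct block computation together with a dimension count.
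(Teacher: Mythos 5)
Your proposal is correct and follows essentially the same route as the paper: the same block decomposition of $L(G,\Sigma)$ with diagonal blocks $L(G_i,\Sigma_i)+m_iI_{n_i}$ and signed all-ones off-diagonal blocks, the same first family of eigenvectors obtained by embedding eigenvectors of $L(G_i,\Sigma_i)$ orthogonal to $\mathbf{1}_{n_i}$, and the same second family obtained by inflating eigenvectors of $M$ to block-constant vectors. Your additional remarks on the diagonalizability of $M$ via conjugation by $\mathrm{diag}(\sqrt{n_1},\ldots,\sqrt{n_k})$ and on the linear independence of the two families make explicit a counting step the paper leaves implicit.
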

\begin{proof}
    For each $i\in[k]$, let $L_i:=L(G_i,\Sigma_i)$.
    Note that $L(G,\Sigma)$ can be arranged as
    \[
        L(G,\Sigma)=\begin{bmatrix}
            L_1+m_1I_{n_1} & s_{1,2}J_{n_1\times n_2} & \cdots & s_{1,k}J_{n_1\times n_k}\\
            s_{2,1}J_{n_2\times n_1} & L_2+m_2I_{n_2} & \cdots & s_{2,k}J_{n_2\times n_k}\\
            \vdots & \vdots & \ddots & \vdots\\
            s_{k,1}J_{n_k\times n_1} & s_{k,2}J_{n_k\times n_2} & \cdots & L_k+m_kI_{n_k} 
        \end{bmatrix},
    \]
    where $s_{i,j}$ is~$1$ if $v_iv_j\in\Sigma'$, $-1$ if $v_iv_j\in E(G')-\Sigma'$, and $0$ otherwise.
    We first show that for each $i\in[k]$ and each $j\in[n_i-1]$, $\lambda_{i,j}+m_i$ is an eigenvalue of~$L(G,\Sigma)$.
    We choose a $\lambda_{i,j}$-eigenvector $x_{i,j}$ of~$L_i$ which is orthogonal to~$\mathbf{1}_{n_i}$, and let $y_{i,j}:=(\mathbf{0}^t,x_{i,j}^t,\mathbf{0}^t)^t$ where the first $\mathbf{0}^t$ has length $\sum_{\ell=1}^{i-1}n_\ell$.
    By the orthogonality,
    \begin{equation*}
        L(G,\Sigma)y_{i,j}
        =\begin{bmatrix}
            s_{1,i}J_{n_1\times n_i}x_{i,j}\\
            \vdots\\
            L_ix_{i,j}+m_ix_{i,j}\\
            \vdots\\
            s_{k,i}J_{n_k\times n_i}x_{i,j}
        \end{bmatrix}
        =\begin{bmatrix}
           \mathbf{0}_{n_1}\\
           \vdots\\
           \lambda_{i,j}x_{i,j}+m_ix_{i,j}\\
           \vdots\\
           \mathbf{0}_{n_k}
        \end{bmatrix}
        =(\lambda_{i,j}+m_i)y_{i,j}.
    \end{equation*}
    Hence, for each $i\in[k]$ and each $j\in[n_i-1]$, $\lambda_{i,j}+m_i$ is an eigenvalue of~$L(G,\Sigma)$.

    We now show that for each $i\in[k]$, $\mu_i$ is an eigenvalue of $L(G,\Sigma)$.
    Let $w_i:=(w_{i,1},\ldots,w_{i,k})^t$ be a $\mu_i$-eigenvector of~$M$, and $z_i:=(w_{i,1}\mathbf{1}_{n_1}^t,\ldots,w_{i,k}\mathbf{1}_{n_k}^t)^t$.
    Note that for each $j\in[k]$,
    \[
        \mu_iw_{i,j}=(\lambda_{j,n_j}+m_j)w_{i,j}+\sum_{\ell\in[k]-\{j\}}s_{j,\ell}\,n_\ell\,w_{i,\ell},
    \]
    and therefore
    \begin{align*}
        L(G,\Sigma)z_i
        &=\begin{bmatrix}
            (L_1\mathbf{1}_{n_1}+m_1\mathbf{1}_{n_1})w_{i,1}+s_{1,2}\,n_2\,w_{i,2}\mathbf{1}_{n_1}+\cdots+s_{1,k}\,n_k\,w_{i,k}\mathbf{1}_{n_1}\\
            s_{2,1}\,n_1\,w_{i,1}\mathbf{1}_{n_2}+(L_2\mathbf{1}_{n_2}+m_2\mathbf{1}_{n_2})w_{i,2}+\cdots+s_{2,k}\,n_k\,w_{i,k}\mathbf{1}_{n_2}\\
            \vdots\\
            s_{k,1}\,n_1\,w_{i,1}\mathbf{1}_{n_k}+s_{k,2}\,n_2\,w_{i,2}\mathbf{1}_{n_k}+\cdots+(L_k\mathbf{1}_{n_k}+m_k\mathbf{1}_{n_k})w_{i,k}
        \end{bmatrix} \\
        &=\begin{bmatrix}
            \left((\lambda_{1,n_1}+m_1)w_{i,1}+\sum_{\ell\in[k]-\{1\}}s_{1,\ell}\,n_\ell\,w_{i,\ell}\right)\mathbf{1}_{n_1}\\
            \left((\lambda_{2,n_2}+m_2)w_{i,2}+\sum_{\ell\in[k]-\{2\}}s_{2,\ell}\,n_\ell\,w_{i,\ell}\right)\mathbf{1}_{n_2}\\
            \vdots\\
            \left((\lambda_{k,n_k}+m_k)w_{i,k}+\sum_{\ell\in[k-1]}s_{k,\ell}\,n_\ell\,w_{i,\ell}\right)\mathbf{1}_{n_k}
        \end{bmatrix}
        =\begin{bmatrix}
            \mu_iw_{i,1}\mathbf{1}_{n_1}\\
            \mu_iw_{i,2}\mathbf{1}_{n_2}\\
            \vdots\\
            \mu_iw_{i,k}\mathbf{1}_{n_k}
        \end{bmatrix}
        =\mu_iz_i.
    \end{align*}
    Hence, for each $i\in[k]$, $\mu_i$ is an eigenvalue of $L(G,\Sigma)$, and this completes the proof.
\end{proof}

For every signed graph $(K_n,\Sigma)$, the following lemma finds an equivalent signed graph where every edge in~$X(\Sigma)$ is even.
This lemma will be used as a preprocessing step of the proofs in later subsections.

\begin{lemma}\label{lem:clean up}
    For every signed graph $(K_n,\Sigma)$ with $n\geq4$, there exists a signed graph $(K_k,\Sigma')$ with vertex set $\{v_1,\ldots,v_k\}$ such that $(K_n,\Sigma)$ is equivalent to a signed graph obtained from $(K_k,\Sigma')$ by substituting~$v_i$ with some signed complete graph $(H_i,\emptyset)$ for each $i\in[k]$ such that $X(\Sigma)=\bigcup_{i=1}^kE(H_i)$.
\end{lemma}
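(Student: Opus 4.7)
The plan is to use \Cref{lem:friendly} to identify the components $C_1,\ldots,C_k$ of the graph on $V(K_n)$ with edge set $X(\Sigma)$, each of which is a clique. Setting $H_i := K_n[C_i]$ gives $X(\Sigma)=\bigcup_{i=1}^{k}E(H_i)$ automatically, and the desired $(K_k,\Sigma')$ will be the quotient obtained by collapsing each $C_i$ to a single vertex $v_i$. The real work is to first replace $(K_n,\Sigma)$ by a switching-equivalent signed graph $(K_n,\tilde\Sigma)$ in which (a) every edge of $X(\Sigma)$ is even, and (b) for every pair $C_i,C_j$ with $i\neq j$, every edge between $C_i$ and $C_j$ has the same parity; once (a) and (b) hold, $(K_n,\tilde\Sigma)$ is literally the substitution of each $v_i$ with $(H_i,\emptyset)$ in the quotient $(K_k,\Sigma')$ defined by $v_iv_j\in\Sigma'$ iff those edges are odd in~$\tilde\Sigma$.

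The key preparatory fact I would use is that switching at a vertex preserves the parity of every triangle: a switching at a vertex of the triangle flips exactly two of its edges, while a switching at an outside vertex flips none. This has two immediate consequences: the set $X(\Sigma)$ is switching-invariant, and every triangle contained in some $C_i$ stays even under any switching.

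To obtain $\tilde\Sigma$, for each component $C_i$ I would pick a root $r_i\in C_i$ and then switch at every non-root vertex $u\in C_i$ for which $r_iu$ is odd in~$\Sigma$. After these (commuting) switchings, each edge $r_iu$ becomes even by construction. For any two non-root vertices $u,u'\in C_i$, the triangle $r_iuu'$ lies inside~$C_i$ and therefore remains even in $\tilde\Sigma$, which together with $r_iu$ and $r_iu'$ being even forces $uu'$ to be even; this yields~(a). For~(b), given $u,u'\in C_i$ and $v'\notin C_i$, the triangle $uu'v'$ is still even in $\tilde\Sigma$ (since $uu'\in X(\Sigma)$) and $uu'$ is now even, so $uv'$ and $u'v'$ share the same parity, which is exactly~(b). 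The main obstacle is the verification in~(a) that this particular choice of switching really makes \emph{all} internal edges of each $C_i$ even, not just the star edges at $r_i$; the triangle-parity observation above handles this cleanly, after which reading off the quotient $(K_k,\Sigma')$ and checking that the substitution description matches $(K_n,\tilde\Sigma)$ are routine.
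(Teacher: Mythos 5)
Your proposal is correct and follows essentially the same route as the paper: decompose the $X(\Sigma)$-graph into cliques via \Cref{lem:friendly}, switch to make each clique all-even (the paper invokes that a signed complete graph with no odd triangles is equivalent to the all-even one, which is proved by exactly your root-switching argument), and use evenness of the triangles $uu'v'$ with $uu'\in X(\Sigma)$ to get constant parity between components. The only cosmetic completion needed is in your step~(b): your triangle argument directly gives equal parity for two $C_i$--$C_j$ edges sharing an endpoint, and the case of disjoint edges follows by comparing both to an edge sharing an endpoint with each, just as in the paper.
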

\begin{proof}
    Let $H_1,\ldots,H_k$ be the components of the graph on $V(K_n)$ with edge set $X(\Sigma)$.
    By~\Cref{lem:friendly}, each~$H_i$ is complete.
    For each $i\in[k]$, since $(H_i,\Sigma\cap E(H_i))$ has no odd triangles, it is equivalent to $(H_i,\emptyset)$.
    Therefore, there exists a signed graph $(K_n,\Gamma)$ equivalent to $(K_n,\Sigma)$ such that $X(\Sigma)\cap\Gamma=\emptyset$.

    We claim that for each distinct $i,j\in[k]$, the edges of $K_n$ between~$V(H_i)$ and~$V(H_j)$ have the same parity in~$(K_n,\Gamma)$.
    Let $vw$ and $v'w'$ be distinct edges of~$K_n$ between $V(H_i)$ and $V(H_j)$ with $\{v,v'\}\in V(H_i)$.
    Since $E(H_i)\cup E(H_j)$ is disjoint from~$\Gamma$, if $vw$ and $v'w'$ have a common end, then they have the same parity in~$(K_n,\Gamma)$.
    Thus, we may assume that they have no common end.
    Then each of~$vw$ and~$v'w'$ has the same parity with~$vw'$ in~$(K_n,\Gamma)$, so~$vw$ and~$v'w'$ have the same parity in~$(K_n,\Gamma)$.
    By the arbitrary choice of~$vw$ and~$v'w'$, this proves the claim.
    
    Let $v_1,\ldots,v_k$ be the vertices of~$K_k$, and let $\Sigma'$ be the set of edges $v_iv_j$ of~$K_k$ such that the edges of~$K_n$ between $V(H_i)$ and $V(H_j)$ are in~$\Gamma$.
    It is readily seen that $(K_n,\Gamma)$ can be obtained from $(K_k,\Sigma')$ by substituting each $v_i$ with $(H_i,\emptyset)$, and this completes the proof.
\end{proof}

\subsection{Spectral integral variation not captured by $X(\Sigma)$}\label{subsec:siv}

For a signed graph $(K_n,\Sigma)$ and each $X\subseteq X(\Sigma)$, the following lemma characterizes when we can add a new edge not in $X(\Sigma)$ to a signed graph $(K_n\setminus X,\Sigma-X)$ with spectral integral variation.

\begin{lemma}\label{lem:siv outside}
    For a signed graph $(K_n,\Sigma)$ with $n\geq4$, let $X\subseteq X(\Sigma)$ and $vw\in E(K_n)-X(\Sigma)$.
    Then spectral integral variation occurs under the addition of~$vw$ to~$(K_n\setminus(X\cup\{vw\}),\Sigma-(X\cup\{vw\}))$ with the same parity as in~$(K_n,\Sigma)$ if and only if $vw\in Y(\Sigma)$.
\end{lemma}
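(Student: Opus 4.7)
The plan is to reduce $(K_n,\Sigma)$ to a canonical substitution form via \Cref{lem:clean up}, center the Laplacian of the pre-addition signed graph via \Cref{obs:centered}, and then apply \Cref{le_siv2_final} to read the $Y(\Sigma)$ conditions off the eigenvector form. First I would invoke \Cref{lem:clean up} to assume $(K_n,\Sigma)$ arises from some $(K_k,\Sigma')$ by substituting each $v_\ell$ with a complete signed graph $(H_\ell,\emptyset)$; write $v \in V(H_i)$ and $w \in V(H_j)$, where $i\neq j$ since $vw\notin X(\Sigma)$. Switching simultaneously at every vertex of $V(H_j)$ is an equivalence that preserves the substitution form and all triangle parities (hence also $X(\Sigma)$ and $Y(\Sigma)$) while toggling the parity of every edge between $V(H_i)$ and $V(H_j)$, so I may further assume $vw$ is even in $\Sigma$.

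Next I would use \Cref{obs:centered} to put $G_0 := (K_n\setminus(X\cup\{vw\}),\Sigma-(X\cup\{vw\}))$ into $(v,w)$-centered form. \Cref{thm:main1 even} rules out type~1 spectral integral variation: it would force the parities of $vu$ and $wu$ to agree across $N(v)\cap N(w)$, making every triangle $uvw$ even and so $vw\in X(\Sigma)$, contradicting the hypothesis. Hence only type~2 can occur, and \Cref{le_siv2_final} supplies eigenvectors $v_1,v_2$ with an explicit block-structured form. A short analysis shows that block~3 and block~4 collect those $u\in N(v)\cap N(w)$ for which the triangle $uvw$ in $(K_n,\Sigma)$ is even and odd respectively; blocks~1 and~2 collect vertices in $V(H_j)-\{w\}$ or $V(H_i)-\{v\}$ whose edges to $w$ or $v$ have been removed by $X$; block~5 is empty. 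Set $\widehat E := \{u : uvw \text{ even in }(K_n,\Sigma)\}$ and $\widehat O := \{u : uvw \text{ odd}\}$.

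The crux is to show that type~2 spectral integral variation forces $V(H_i)=\{v\}$ and $V(H_j)=\{w\}$. The key preliminary observation is that for every $u \in V(H_i)-\{v\}$ or $V(H_j)-\{w\}$, two edges of the triangle $uvw$ lie between the same pair of blocks (hence share their parity) while the third lies inside a block (hence is even), so $uvw$ is automatically even and $u$ lies in blocks~1,~2, or~3 of the eigenvector. I would then evaluate the eigenvalue equation $L(G_0)v_i=\lambda_iv_i$ at such a coordinate $u$, substituting \Cref{le_siv2_final}'s block values for $v_i$ and using $(v_i)_v+(v_i)_w = d_2-d_1$; after the block-structured sums collapse, the equation reduces to $Z+2|\widehat O|=0$ for a non-negative quantity $Z$ encoding the $X$-edges incident to $u$ and to $w$. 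Since $vw\notin X(\Sigma)$ is precisely $\widehat O\neq\emptyset$, this is a contradiction, recovering the conclusion of \Cref{obs:no common} as a necessary condition.

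In the resulting singleton case, blocks~1,~2,~5 are empty and $d_1=d_2=n-2$, and a direct centering computation shows that the parity of each edge $uu'$ in the centered $G_0$ coincides with the parity of the triangle $uvu'$ in $(K_n,\Sigma)$. The eigenvalue equation at $u\in\widehat E$ becomes $\#\{u'\in\widehat O : uvu'\text{ odd}\} = \#\{u'\in\widehat O : uvu'\text{ even}\}$, forcing $|\widehat O|$ to be even; at $u\in\widehat O$ it becomes the same identity over $\widehat O-\{u\}$, forcing $|\widehat O|$ to be odd. Since $\widehat O\neq\emptyset$, both can hold only if $\widehat E=\emptyset$, which is exactly the clause of $Y(\Sigma)$ requiring every triangle $uvw$ to be odd; the remaining equation then matches the ``one more odd than even'' clause of $Y(\Sigma)$. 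The converse is obtained by reading each equivalence in reverse to construct the \Cref{le_siv2_final} eigenvectors directly. The main obstacle is the non-singleton exclusion in the previous paragraph, whose careful bookkeeping of the $X$-edges against the block values of the eigenvector constitutes the technical heart of the proof.
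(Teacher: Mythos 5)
Your proposal is correct and follows essentially the same route as the paper: reduce via \Cref{lem:clean up}, center via \Cref{obs:centered}, rule out type~1, prove the singleton claim for $V(H_i)$ and $V(H_j)$ by evaluating the type-2 conditions at vertices of $V(H_i)-\{v\}$ and $V(H_j)-\{w\}$, and then match the resulting parity identities on $\widehat O$ with the definition of $Y(\Sigma)$. The only cosmetic differences are that you phrase the computations through the explicit eigenvectors of \Cref{le_siv2_final} rather than the row-sum conditions of \Cref{thm:main2 even}, and you verify the converse by direct eigenvector construction where the paper first treats $(K_n\setminus vw,\Sigma)$ and transfers to the $X$-deleted graph via \Cref{lem:substitution}.
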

\begin{proof}
    Since each of $X(\Sigma)$ and $Y(\Sigma)$ remains unchanged after switching at vertices of~$K_n$, it suffices to show the statement for any signed graph equivalent to $(K_n,\Sigma)$.
    We start with switching at vertices of $K_n$ to find an equivalent signed graph fitting to our proof.
    By \Cref{lem:clean up}, we may assume that $(K_n,\Sigma)$ is obtained from a signed graph $(K_k,\Sigma')$ with vertex set $\{v_1,\ldots,v_k\}$ by substituting each~$v_i$ with a signed graph $(H_i,\emptyset)$ such that $X(\Sigma)=\bigcup_{i=1}^kE(H_i)$.
    For each $i\in[k]$, let $H'_i:=H_i\setminus(X\cap E(H_i))$.
    Note that $(K_n\setminus X,\Sigma-X)$ can be obtained from $(K_k,\Sigma')$ by substituting each~$v_i$ with $(H'_i,\emptyset)$.
    Since $vw\in E(K_n)-X(\Sigma)$, there are distinct $t,t'\in[k]$ with $v\in V(H_t)$ and $w\in V(H_{t'})$.
    Without loss of generality, we may assume that~$t=1$ and $t'=2$.
    For each $i\in[k]-\{1\}$ and distinct $u_1,u_2\in V(H_i)$, by the definition of substituting at~$v_1$, $u_1v$ and $u_2v$ have the same parity in~$(K_n,\Sigma)$.
    Similarly, for each $i'\in[k]-\{2\}$ and distinct $u'_1,u'_2\in V(H_{i'})$, $u'_1w$ and $u'_2w$ have the same parity in~$(K_n,\Sigma)$.
    Hence, by~\Cref{obs:centered}, we may further assume that $(K_n\setminus vw,\Sigma-\{vw\})$ is $(v,w)$-centered and $\Sigma\cap\bigcup_{i=1}^kE(H_i)=\emptyset$.
    Finally, by switching at~$w$ if necessary in the case of $\abs{V(H_1)}=\abs{V(H_2)}=1$, we assume that~$vw$ is an even edge of~$(K_n,\Sigma)$.
    Note that the edges of~$K_n$ between~$V(H_1)$ and~$V(H_2)$ are even edges in~$(K_n,\Sigma)$.
    We now arrange $L(K_n\setminus vw,\Sigma)$ as in~\eqref{eq:centered} and let $(G,\Gamma):=(K_n\setminus(X\cup\{vw\}),\Sigma-X)$.
    
    We first show the backward direction.
    Suppose that $vw\in Y(\Sigma)$.
    By~\Cref{obs:no common}, $V(H_1)=\{v\}$ and $V(H_2)=\{w\}$.
    For every $u\in V(K_n)-\{v,w\}$, since $uvw$ is odd and $(K_n\setminus vw,\Sigma)$ is $(v,w)$-centered, $u$ is an even neighbor of~$v$ and an odd neighbor of~$w$ in~$(K_n\setminus vw,\Sigma)$.
    In addition, by the definition of $Y(\Sigma)$, the number of odd neighbors of~$u$ in~$V(K_n)-\{v,w\}$ is equal to that of even neighbors of~$u$ in~$V(K_n)-\{v,w\}$.
    It implies that $L_{44}\mathbf{1}=(n-1)\mathbf{1}$, thus by~\Cref{thm:main2 even}, spectral integral variation occurs under the addition of an even edge~$vw$ to $(K_n\setminus vw,\Sigma)$.

    We show that spectral integral variation also occurs under the addition of an even edge~$vw$ to~$(G,\Gamma)$.
    Since $V(H_1)=\{v\}$ and $V(H_2)=\{w\}$, $(G,\Gamma)$ can be obtained from $(K_k\setminus v_1v_2,\Sigma')$ by substituting each~$v_i$ with $(H'_i,\emptyset)$.
    By~\Cref{lem:substitution}, spectral integral variation occurs under the addition of an even edge~$vw$ to~$(K_n\setminus vw,\Sigma)$ if and only if the spectra of $M(K_k,\Sigma')$ and $M(K_k\setminus v_1v_2,\Sigma')$ differ by integer quantities if and only if spectral integral variation occurs under the addition of an even edge~$vw$ to~$(G,\Gamma)$.
    Hence, spectral integral variation occurs under the addition of an even edge~$vw$ to $(G,\Gamma)$.

    We now show the forward direction.
    Suppose that spectral integral variation occurs under the addition of an even edge~$vw$ to~$(G,\Gamma)$.
    Since $v\in V(H_1)$ and $w\in V(H_2)$, every vertex in $V(G)-\{v,w\}$ is adjacent in~$(G,\Gamma)$ to at least one of~$v$ and~$w$.
    Let $A:=N_G(v)-N_G(w)$, $B:=N_G(w)-N_G(v)$, $C:=N_{G,\Gamma}^+(v)\cap N_{G,\Gamma}^+(w)$, and $D:=N_{G,\Gamma}^+(v)\cap N_{G,\Gamma}^-(w)$.
    Since $(G,\Gamma)$ is $(v,w)$-centered, $(A,B,C,D)$ is a partition of $V(G)-\{v,w\}$.
    Since $H_1$ and $H_2$ are disjoint, we have that $\abs{D}\geq1$.
    Note that
    \begin{align*}
        \{v\}\cup B\subseteq V(H_1)&\subseteq\{v\}\cup B\cup C,\\
        \{w\}\cup A\subseteq V(H_2)&\subseteq\{w\}\cup A\cup C,
    \end{align*}
    and for each $i\in[k]-[2]$, $V(H_i)$ is a subset of~$C$ or~$D$.
    Hence, the degrees of~$v$ and~$w$ in~$G$ are $d_1=n-2-\abs{B}$ and $d_2=n-2-\abs{A}$, respectively.
    In addition, each vertex in~$C$ is adjacent in~$G$ to every vertex in~$D$, and vice versa.

    \begin{claim}\label{clm:singleton}
        $\abs{V(H_1)}=\abs{V(H_2)}=1$.
    \end{claim}
    \begin{subproof}
        We first show that $(V(H_1)\cup V(H_2))\cap C=\emptyset$.
        Suppose for contradiction that $(V(H_1)\cup V(H_2))\cap C$ contains~$u$.
        If $u\in V(H_2)$, then for the number $d(u,A)$ of edges between~$u$ and~$A$, by~\Cref{thm:main2 even}\eqref{eq:main 3},
        \[
            -d(u,A)+\abs{B}+2\abs{D}=\abs{B}-\abs{A},
        \]
        and therefore $\abs{A}\geq d(u,A)=\abs{A}+2\abs{D}\geq\abs{A}+2$, a contradiction.
        Otherwise, for the number $d(u,B)$ of edges between~$u$ and~$B$, by~\Cref{thm:main2 even}\eqref{eq:main 3},
        \[
            -\abs{A}+d(u,B)-2\abs{D}=\abs{B}-\abs{A},
        \]
        and therefore $\abs{B}\geq d(u,B)=\abs{B}+2\abs{D}\geq\abs{B}+2$, a contradiction.
        Hence, $(V(H_1)\cup V(H_2))\cap C=\emptyset$.
        
        Since $V(H_1)=\{v\}\cup B$ and $V(H_2)=\{w\}\cup A$, it suffices to show that $A\cup B=\emptyset$.
        Suppose for contradiction that $A\neq\emptyset$.
        Note that each vertex in~$A$ is adjacent in~$G$ to every vertex in~$V(G)-(A\cup\{w\})$.
        Since $\Sigma\cap E(H_2)=\emptyset$, $L_{11}\mathbf{1}=(n-1-\abs{A})\mathbf{1}$, and therefore by~\Cref{thm:main2 even}\eqref{eq:main 1},
        \[
            (n-1-\abs{A})+\abs{B}+2\abs{D}=n-1-\abs{A},
        \]
        contradicting that $\abs{D}\geq1$.
        Thus, $A=\emptyset$.

        We may assume that $B\neq\emptyset$, because otherwise we are done.
        Note that each vertex in~$B$ is adjacent in~$G$ to every vertex in $V(G)-(B\cup\{v\})$.
        Since $\Sigma\cap E(H_1)=\emptyset$, $L_{22}\mathbf{1}=(n-1-\abs{B})\mathbf{1}$, and therefore by~\Cref{thm:main2 even}\eqref{eq:main 2},
        \[
            -\abs{A}-(n-1-\abs{B})-2\abs{D}=-(n-1-\abs{B})
        \]
        contradicting that $\abs{D}\geq1$, and this proves the claim.
    \end{subproof}

    By~\Cref{clm:singleton}, $V(G)-\{v,w\}=C\cup D$, so by the assumption on~$(K_n,\Sigma)$, $vw$ is not contained in~$\Sigma$.
    For every $u\in V(G)-\{v,w\}$, let $d^+(u,D):=\abs{D\cap N_{G,\Gamma}^+(u)}$, $d^-(u,D):=\abs{D\cap N_{G,\Gamma}^-(u)}$, and $t_u:=\abs{D\setminus N_G(u)}$.
    Note that $\abs{D}=t_u+d^+(u,D)+d^-(u,D)$ for every $u\in V(G)-\{v,w\}$.

    For every $u\in D$, by~\Cref{thm:main2 even}\eqref{eq:main 4}, $2(n-t_u-d^+(u,D)+d^-(u,D))=2(n-1)$, and therefore
    \begin{equation}
        d^-(u,D)=t_u+d^+(u,D)-1.\label{eq:odd}
    \end{equation}
    Thus, $\abs{D}=t_u+d^+(u,D)+d^-(u,D)=2d^-(u,D)+1$, which is odd.
    If~$C$ contains some vertex~$u'$, then~$u'$ is adjacent to every vertex in~$D$, so by~\Cref{thm:main2 even}\eqref{eq:main 3}, $-d^+(u',D)+d^-(u',D)=0$, contradicting that~$\abs{D}$ is odd.
    Hence, $C=\emptyset$.

    We now show that $vw\in Y(\Sigma)$.
    Since $D=V(K_n)-\{v,w\}$ and $vw\notin\Sigma$, for every $u\in D$, $uvw$ is odd in~$(K_n,\Sigma)$.
    Since $(K_n\setminus vw,\Sigma)$ is $(v,w)$-centered, $v$ is incident with no odd edge of~$(K_n,\Sigma)$.
    Thus, for each $u\in D$, the number of odd (resp. even) triangles of~$(G,\Gamma)$ containing~$u$ and~$v$ is $d^-(u,D)$ (resp. $d^+(u,D)$).
    On the other hand, since $X(\Sigma)=\bigcup_{i=1}^kE(H_i)$ and $\Sigma\cap\bigcup_{i=1}^kE(H_i)=\emptyset$, for each $u\in D$, $uvw$ is the unique odd triangle of~$(K_n,\Sigma)$ containing~$u,v$ and an edge in~$E(K_n)-E(G)$, and the number of even triangles of~$(K_n,\Sigma)$ containing~$u,v$ and an edge in $E(K_n)-E(G)$ is~$t_u-1$.
    Thus, for each $u\in D$, by~\eqref{eq:odd}, the number of odd triangles of~$(K_n,\Sigma)$ containing~$u$ and~$v$ is one more than the number of even triangles of~$(K_n,\Sigma)$ containing~$u$ and~$v$, which implies that $vw\in Y(\Sigma)$.
\end{proof}

\subsection{Proof of~\Cref{thm:main3}}\label{subsec:main proof}

We will use the following lemma to prove~\Cref{thm:main3}.

\begin{lemma}\label{lem:poset}
    For a signed graph $(K_n,\Sigma)$, let $\mathcal{C}$ be a set of $n$-vertex signed graphs satisfying the following:
    \begin{enumerate}[label=(\roman*)]
        \item\label{equiv1} $(K_n,\Sigma')\in\mathcal{C}$ if and only if $\Sigma'=\Sigma$,
        \item\label{equiv2} for each $(G,\Sigma')\in\mathcal{C}$ with $G\neq K_n$, there exists an edge $vw\in E(K_n)-E(G)$ such that either $(G+vw,\Sigma')$ or $(G+vw,\Sigma'\cup\{vw\})$ is in~$\mathcal{C}$, and
        \item\label{equiv3} for each $(G,\Sigma')\in\mathcal{C}$ and $vw\in E(G)$, $(G\setminus vw,\Sigma'-\{vw\})\in\mathcal{C}$ if and only if spectral integral variation occurs under the addition of~$vw$ to $(G\setminus vw,\Sigma'-\{vw\})$ with the same parity as in~$(K_n,\Sigma)$.
    \end{enumerate}
    Then a signed graph is integrally $\Sigma$-completable if and only if it is contained in~$\mathcal{C}$.
\end{lemma}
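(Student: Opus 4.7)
The plan is to prove both directions of the biconditional by induction on $\abs{E(K_n)}-\abs{E(G)}$, using property~(ii) to ascend in edge count, property~(iii) to translate between membership in~$\mathcal{C}$ and occurrence of spectral integral variation, and property~(i) as the base case. Before starting the main argument, I would first establish a preliminary claim: for every $(G,\Sigma')\in\mathcal{C}$, we have $\Sigma'=\Sigma\cap E(G)$. This is proved by induction on $\abs{E(K_n)}-\abs{E(G)}$. The base case $G=K_n$ is exactly~(i). For the inductive step, (ii) yields $vw\in E(K_n)-E(G)$ with $(G+vw,\Sigma_1)\in\mathcal{C}$ for some $\Sigma_1\in\{\Sigma',\Sigma'\cup\{vw\}\}$; the induction hypothesis applied to $(G+vw,\Sigma_1)$ gives $\Sigma_1=\Sigma\cap E(G+vw)$, and intersecting with $E(G)$ yields $\Sigma'=\Sigma\cap E(G)$. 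This claim is crucial because~(iii) is phrased in terms of the parity of~$vw$ in~$\Sigma$ rather than in the current sign~$\Sigma'$, and the claim forces the two parities to agree whenever $vw\in E(G)$.

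For the forward direction, suppose $(G,\Sigma')\in\mathcal{C}$ and induct on $\abs{E(K_n)}-\abs{E(G)}$. If $G=K_n$, then $(G,\Sigma')=(K_n,\Sigma)$ by~(i), and the trivial one-term sequence works. Otherwise, by~(ii) choose~$vw$ with $(G+vw,\Sigma_1)\in\mathcal{C}$ for the appropriate $\Sigma_1$. By the induction hypothesis, $(G+vw,\Sigma_1)$ is integrally $\Sigma$-completable. To prepend $vw$ to the completing sequence, apply~(iii) to $(G+vw,\Sigma_1)$ with edge~$vw$: since $(G,\Sigma')=(G+vw\setminus vw,\Sigma_1-\{vw\})$ lies in~$\mathcal{C}$, spectral integral variation occurs under the addition of~$vw$ to~$(G,\Sigma')$ with the parity prescribed by~$\Sigma$. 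By the preliminary claim, the parity of~$vw$ in~$\Sigma_1$ coincides with that in~$\Sigma$, so this addition lands in $(G+vw,\Sigma_1)$, as required.

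For the backward direction, suppose $(G,\Sigma')$ admits a completing sequence $(G_0,\Sigma_0),\ldots,(G_m,\Sigma_m)$, and induct on~$m$. The case $m=0$ is again~(i). For $m\geq 1$, the tail $(G_1,\Sigma_1),\ldots,(G_m,\Sigma_m)$ is a completing sequence for $(G_1,\Sigma_1)$ of length $m-1$, so the induction hypothesis gives $(G_1,\Sigma_1)\in\mathcal{C}$. Apply the preliminary claim to $(G_1,\Sigma_1)$ to conclude that the parity of~$v_1w_1$ in~$\Sigma_1$ agrees with that in~$\Sigma$; combined with the definitional fact that adding~$v_1w_1$ to~$(G,\Sigma')$ produces spectral integral variation, this verifies the right-hand side of~(iii) for $(G_1,\Sigma_1)$ and edge~$v_1w_1$. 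Hence the ``if'' direction of~(iii) yields $(G,\Sigma')=(G_1\setminus v_1w_1,\Sigma_1-\{v_1w_1\})\in\mathcal{C}$.

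The main subtlety throughout is parity bookkeeping: the natural statement of~(iii) mixes the current sign $\Sigma'$ (used to define $(G\setminus vw,\Sigma'-\{vw\})$) with the target sign~$\Sigma$ (used to fix the parity of the re-added edge). The preliminary claim resolves this tension uniformly by showing $\Sigma'=\Sigma\cap E(G)$ throughout~$\mathcal{C}$, so that both ``remove then re-add with parity from~$\Sigma$'' and ``move from $(G,\Sigma')$ to $(G+vw,\Sigma_1)$'' describe the same operation, after which the two inductions run smoothly.
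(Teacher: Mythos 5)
Your proof is correct and follows essentially the same induction on the number of missing edges as the paper's, using (ii) to ascend in edge count and (iii) to transfer membership in $\mathcal{C}$ to and from spectral integral variation. The only difference is your explicit preliminary claim that $\Sigma'=\Sigma\cap E(G)$ for every $(G,\Sigma')\in\mathcal{C}$, which the paper leaves implicit; this is a careful clarification of the parity bookkeeping rather than a genuinely different route.
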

\begin{proof}
    Let $(H,\Gamma)$ be a signed graph.
    We proceed by induction on $k:=\abs{E(K_n)-E(H)}$ to show that $(H,\Gamma)$ is integrally $\Sigma$-completable if and only if $(H,\Gamma)\in\mathcal{C}$.
    The base case is obvious from the definition of integrally $\Sigma$-completability and~\ref{equiv1}.
    Thus, we may assume that $k>0$.

    We first show the backward direction.
    Suppose that $(H,\Gamma)\in\mathcal{C}$.
    Since $k\geq1$, by~\ref{equiv2}, there exists an edge $vw\in E(K_n)-E(H)$ such that $(H+vw,\Gamma')\in\mathcal{C}$ for some $\Gamma'\in\{\Gamma,\Gamma\cup\{vw\}\}$.
    By the inductive hypothesis, $(H+vw,\Gamma')$ is integrally $\Sigma$-completable.
    Then by~\ref{equiv3}, spectral integral variation occurs under the addition of $vw$ to $(H,\Gamma)$ with the same parity as in $(H+vw,\Gamma')$.
    Hence, $(H,\Gamma)$ is integrally $\Sigma$-completable.

    We now show the forward direction.
    Suppose that $(H,\Gamma)$ is integrally $\Sigma$-completable.
    Since $k\geq1$, there exists an edge $vw\in E(K_n)-E(H)$ such that spectral integral variation occurs under the addition of~$vw$ to $(H,\Gamma)$ with the same parity as in $(K_n,\Sigma)$.
    Note that the resulting signed graph is also integrally $\Sigma$-completable, so by the inductive hypothesis, it is contained in~$\mathcal{C}$.
    Then by~\ref{equiv3}, $(H,\Gamma)\in\mathcal{C}$.

    This completes the proof by induction.
\end{proof}

We now prove~\Cref{thm:main3}.

\begin{proof}[Proof of~\Cref{thm:main3}]
    Let $\mathcal{C}$ be the set of $n$-vertex signed graphs $(H,\Gamma)$ such that $E(K_n)-E(H)\subseteq X(\Sigma)\cup Y(\Sigma)$, $\Gamma=\Sigma\cap E(H)$, and $K_n\setminus(X(\Sigma)-E(H))$ is integrally completable.
    By~\Cref{lem:poset}, it suffices to show that~$\mathcal{C}$ satisfies~\Cref{lem:poset}\ref{equiv1}--\ref{equiv3}.
    It is readily seen that~$\mathcal{C}$ satisfies~\Cref{lem:poset}\ref{equiv1}.

    We show that~$\mathcal{C}$ satisfies~\Cref{lem:poset}\ref{equiv2}.
    Let $(H,\Gamma)$ be a signed graph in~$\mathcal{C}$ with $H\neq K_n$ and let $Z:=E(K_n)-E(H)$.
    Recall that $Z\subseteq X(\Sigma)\cup Y(\Sigma)$.
    Suppose first that $Z\cap X(\Sigma)=\emptyset$.
    Then $Z=Y(\Sigma)$ which is a singleton by~\Cref{lem:suspicious}.
    Since $(K_n,\Sigma)\in\mathcal{C}$, $\mathcal{C}$ satisfies~\Cref{lem:poset}\ref{equiv2} in this case.
    Thus, we may assume that $Z\cap X(\Sigma)\neq\emptyset$.
    Since $K_n\setminus(X(\Sigma)-E(H))=K_n\setminus(X(\Sigma)\cap Z)$ is integrally completable, we can choose an edge $vw\in X(\Sigma)\cap Z$ such that~$K_n\setminus(X(\Sigma)-E(H+vw))$ is integrally completable.
    If $vw\in\Sigma$, then $(H+vw,\Gamma\cup\{vw\})$ is in~$\mathcal{C}$, and otherwise $(H+vw,\Gamma)\in\mathcal{C}$.
    Hence, $\mathcal{C}$ satisfies~\Cref{lem:poset}\ref{equiv2}.

    We now show that~$\mathcal{C}$ satisfies~\Cref{lem:poset}\ref{equiv3}.
    Since spectral integral variation is invariant under switching at vertices of~$K_n$, by~\Cref{lem:clean up}, we may assume that $(K_n,\Sigma)$ is obtained from a signed graph~$(K_k,\Sigma')$ with vertex set~$\{v_1,\ldots,v_k\}$ by substituting~$v_i$ with some signed complete graph $(H_i,\emptyset)$ for each $i\in[k]$ such that $X(\Sigma)=\bigcup_{i=1}^kE(H_i)$.
    Let $(H,\Gamma)$ be a signed graph in~$\mathcal{C}$, let $Z:=E(K_n)-E(H)$, and let $vw\in E(H)$.
    For each $i\in[k]$, let $H'_i:=H_i\setminus(Z\cap E(H_i))$.
    
    We consider four cases depending on whether $vw\in X(\Sigma)$ and whether $Z\cap Y(\Sigma)=\emptyset$.

    \medskip
    \noindent\textbf{Case 1.} $\{vw\}\cup Z\subseteq X(\Sigma)$.

    Since $K_n\setminus(X(\Sigma)-E(H))=K_n\setminus Z=H$, $H$ is integrally completable by the definition of~$\mathcal{C}$, and $(H,\Gamma)$ can be obtained from $(K_k,\Sigma')$ by substituting each $v_i$ with $(H'_i,\emptyset)$.
    Recall that a graph is integrally completable if and only if it has no induced subgraph isomorphic to $P_4$ or $2K_2$~\cite{kirkland05}.
    For each $i\in[k]$, since each vertex of $H'_i$ is adjacent in~$H$ to every vertex in~$V(K_n)-V(H'_i)$ and both $P_4$ and $2K_2$ have maximum degree at most~$2$, if $H$ has~$P_4$ or~$2K_2$ as an induced subgraph, then it is an induced subgraph of $H_\ell$ for some $\ell\in[k]$.
    Thus, we deduce that~$H$ is integrally completable if and only if $H'_i$ is integrally completable for every $i\in[k]$.
    Since $vw\in X(\Sigma)$, there exists $j\in[k]$ with $vw\in E(H_j)$.
    By~\Cref{lem:substitution}, the spectral variation occurred by the addition of an even edge~$vw$ to~$(H\setminus vw,\Gamma)$ is the same as that occurred by the addition of an edge~$vw$ to $H'_j\setminus vw$.
    Thus, the following hold.
    \begin{enumerate}
        \item[] Spectral integral variation occurs under the addition of an even edge~$vw$ to~$(H\setminus vw,\Gamma)$.
        \item[$\Leftrightarrow$] Spectral integral variation occurs under the addition of an edge~$vw$ to~$H'_j\setminus vw$.
        \item[$\Leftrightarrow$] Spectral integral variation occurs under the addition of an edge~$vw$ to~$H\setminus vw$.
        \item[$\Leftrightarrow$] $H\setminus vw=K_n\setminus(X(\Sigma)-E(H\setminus vw))$ is integrally completable.
        \item[$\Leftrightarrow$] $(H\setminus vw,\Gamma)\in\mathcal{C}$.
    \end{enumerate}
    Hence, $\mathcal{C}$ satisfies~\Cref{lem:poset}\ref{equiv3}.
    
    \medskip
    \noindent\textbf{Case 2.} $vw\in X(\Sigma)$ and $Z\cap Y(\Sigma)\neq\emptyset$.

    Let $v'w'$ be an edge in $Z\cap Y(\Sigma)$, which is unique by~\Cref{lem:suspicious}.
    Since $v'w'\in Y(\Sigma)$, there are distinct $\ell,\ell'\in[k]$ with $V(H_\ell)=\{v'\}$ and $V(H_{\ell'})=\{w'\}$.
    Since $vw\in X(\Sigma)$, there exists $j\in[k]$ with $vw\in E(H_j)$.
    By~\Cref{obs:no common}, $j$, $\ell$, and $\ell'$ are pairwise distinct.
    We may assume that $\ell=1$ and $\ell=2$.
    
    Since $K_n\setminus(X(\Sigma)-E(H))=K_n\setminus(Z-\{v'w'\})=H+v'w'$, $(H,\Gamma)$ can be obtained from $(K_k\setminus v_1v_2,\Sigma'-\{v_1v_2\})$ by substituting each~$v_i$ with~$(H'_i,\emptyset)$.
    For each $i\in[k]$, since each vertex of $H'_i$ is adjacent in~$H+v'w'$ to every vertex in~$V(K_n)-V(H'_i)$, $H+v'w'$ is integrally completable if and only if $H'_i$ is integrally completable for every $i\in[k]$.
    By~\Cref{lem:substitution}, the spectral variation occurred by the addition of an even edge~$vw$ to~$(H\setminus vw,\Gamma)$ is the same as that occurred by the addition of an edge~$vw$ to~$H'_j\setminus vw$.
    Let $H':=(H+v'w')\setminus vw$ and let $\Gamma'$ be $\Gamma\cup\{v'w'\}$ if $v'w'\in\Sigma$, and $\Gamma$ otherwise.
    Thus, the following hold.
    \begin{enumerate}
        \item[] Spectral integral variation occurs under the addition of an even edge~$vw$ to~$(H\setminus vw,\Gamma)$.
        \item[$\Leftrightarrow$] Spectral integral variation occurs under the addition of an edge~$vw$ to~$H'_j\setminus vw$.
        \item[$\Leftrightarrow$] Spectral integral variation occurs under the addition of an edge~$vw$ to~$H'$.
        \item[$\Leftrightarrow$] $(H',\Gamma')\in\mathcal{C}$.
        \item[$\Leftrightarrow$] $K_n\setminus(X(\Sigma)-E(H'))=K_n\setminus(X(\Sigma)-E(H\setminus vw))$ is integrally completable.
        \item[$\Leftrightarrow$] $(H\setminus vw,\Gamma)\in\mathcal{C}$.
    \end{enumerate}
    Here, the third and the fourth are equivalent by the previous case.
    Hence, $\mathcal{C}$ satisfies~\Cref{lem:poset}\ref{equiv3}.

    \medskip
    \noindent\textbf{Case 3.} $vw\notin X(\Sigma)$ and $Z\subseteq X(\Sigma)$.

    For the backward direction of~\Cref{lem:poset}\ref{equiv3}, suppose first that spectral integral variation occurs under the addition of~$vw$ to $(H\setminus vw,\Gamma-\{vw\})$ with the same parity as in~$(K_n,\Sigma)$.
    Since $vw\notin X(\Sigma)$, by~\Cref{lem:siv outside}, $vw\in Y(\Sigma)$.
    Then $E(K_n)-E(H\setminus vw)=Z\cup\{vw\}\subseteq X(\Sigma)\cup Y(\Sigma)$ and $K_n\setminus(X(\Sigma)-E(H))=K_n\setminus(X(\Sigma)-E(H\setminus vw))$.
    Hence, $(H\setminus vw,\Gamma-\{vw\})\in\mathcal{C}$.

    For the forward direction of~\Cref{lem:poset}\ref{equiv3}, suppose that $(H\setminus vw,\Gamma-\{vw\})\in\mathcal{C}$.
    By the definition of~$\mathcal{C}$, $Z\cup\{vw\}\subseteq X(\Sigma)\cup Y(\Sigma)$.
    Since $vw\notin X(\Sigma)$, $vw$ is contained in~$Y(\Sigma)$.
    Then by~\Cref{lem:siv outside}, spectral integral variation occurs under the addition of~$vw$ to $(K_n\setminus(Z\cup\{vw\}),\Sigma-(Z\cup\{vw\}))=(H\setminus vw,\Gamma-\{vw\})$ with the same parity as in~$(K_n,\Sigma)$.

    \medskip
    \noindent\textbf{Case 4.} $vw\notin X(\Sigma)$ and $Z\cap Y(\Sigma)\neq\emptyset$.

    Let $v'w'$ be an edge in $Z\cap Y(\Sigma)$, which is unique by~\Cref{lem:suspicious}.
    As $vw\in E(H)$ and $v'w'\notin E(H)$, they are distinct, so $vw\notin Y(\Sigma)$, that is, $vw\notin X(\Sigma)\cup Y(\Sigma)$.
    By the definition of~$\mathcal{C}$, $(H\setminus vw,\Gamma-\{vw\})\notin\mathcal{C}$.
    Hence, to show that~$\mathcal{C}$ satisfies~\Cref{lem:poset}\ref{equiv3}, it suffices to show that spectral integral variation does not occur under the addition of~$vw$ to~$(H\setminus vw,\Gamma-\{vw\})$ with the same parity as in~$(K_n,\Sigma)$.
    Suppose not.
    To derive a contradiction, we consider $(K_n,\Sigma\triangle\{v'w'\})$.
    Since $v'w'\in Y(\Sigma)$, by~\Cref{lem:swapping}, $Z\subseteq X(\Sigma)\cup\{v'w'\}=X(\Sigma\triangle\{v'w'\})$ and $Y(\Sigma\triangle\{v'w'\})=\emptyset$.
    Since $vw\notin X(\Sigma)$, $vw$ is not contained in~$X(\Sigma\triangle\{v'w'\})$.
    Since spectral integral variation occurs under the addition of~$vw$ to~$(H\setminus vw,\Gamma-\{vw\})=(K_n\setminus(Z\cup\{vw\}),\Sigma-(Z\cup\{vw\}))$ with the same parity as in~$(K_n,\Sigma\triangle\{v'w'\})$, by~\Cref{lem:siv outside}, $vw\in Y(\Sigma\triangle\{v'w'\})$, contradicting that $Y(\Sigma\triangle\{v'w'\})=\emptyset$.
    This completes the proof.
\end{proof}

\bibliographystyle{amsplain}

\begin{thebibliography}{10}

\bibitem{balinska02}
Krystyna Bali\'{n}ska, Drago\v{s} Cvetkovi\'{c}, Zoran Radosavljevi\'{c},
  Slobodan~K. Simi\'{c}, and Dragon Stevanovi\'{c}, \emph{A survey on integral
  graphs}, Univ. Beograd. Publ. Elektrotehn. Fak. Ser. Mat. \textbf{13} (2002),
  42--65. \MR{1992839}

\bibitem{belardo15}
Francesco Belardo and Pawe{\l} Petecki, \emph{Spectral characterizations of
  signed lollipop graphs}, Linear Algebra Appl. \textbf{480} (2015), 144--167.
  \MR{3348518}

\bibitem{freitas10}
Maria Aguieiras~A. de~Freitas, Nair M.~M. de~Abreu, Renata~R. Del-Vecchio, and
  Samuel Jurkiewicz, \emph{Infinite families of {$Q$}-integral graphs}, Linear
  Algebra Appl. \textbf{432} (2010), no.~9, 2352--2360. \MR{2599865}

\bibitem{freitas09}
Maria Aguieiras~A. de~Freitas, Renata~R. Del-Vecchio, Nair M.~M. de~Abreu, and
  Steve Kirkland, \emph{On {$Q$}-spectral integral variation}, L{AGOS}'09---{V}
  {L}atin-{A}merican {A}lgorithms, {G}raphs and {O}ptimization {S}ymposium,
  Electron. Notes Discrete Math., vol.~35, Elsevier Sci. B. V., Amsterdam,
  2009, pp.~203--208. \MR{2579431}

\bibitem{lima07}
Leonardo~Silva de~Lima, Nair Maria~Maia de~Abreu, Carla~Silva Oliveira, and
  Maria Aguieiras~Alvarez de~Freitas, \emph{{L}aplacian integral graphs in
  {$S(a,b)$}}, Linear Algebra Appl. \textbf{423} (2007), no.~1, 136--145.
  \MR{2312330}

\bibitem{del-vecchio18}
Renata~R. Del-Vecchio and \'{A}tila~Arueira Jones, \emph{{L}aplacian
  integrality in {$P_4$}-sparse and {$P_4$}-extendible graphs}, Appl. Math.
  Comput. \textbf{330} (2018), 307--315. \MR{3777316}

\bibitem{fallat05}
Shaun~M. Fallat, Stephen~J. Kirkland, Jason~J. Molitierno, and M.~Neumann,
  \emph{On graphs whose {L}aplacian matrices have distinct integer
  eigenvalues}, J. Graph Theory \textbf{50} (2005), no.~2, 162--174.
  \MR{2165035}

\bibitem{goldberger13}
Assaf Goldberger and Michael Neumann, \emph{On a conjecture on a {L}aplacian
  matrix with distinct integral spectrum}, J. Graph Theory \textbf{72} (2013),
  no.~2, 178--208. \MR{3010008}

\bibitem{grone94}
Robert Grone and Russell Merris, \emph{The {L}aplacian spectrum of a graph.
  {II}}, SIAM J. Discrete Math. \textbf{7} (1994), no.~2, 221--229.
  \MR{1271994}

\bibitem{grone08}
\bysame, \emph{Indecomposable {L}aplacian integral graphs}, Linear Algebra
  Appl. \textbf{428} (2008), no.~7, 1565--1570. \MR{2388640}

\bibitem{hammer96}
Peter~L. Hammer and Alexander~K. Kelmans, \emph{{L}aplacian spectra and
  spanning trees of threshold graphs}, vol.~65, 1996, First International
  Colloquium on Graphs and Optimization (GOI), 1992 (Grimentz), pp.~255--273.
  \MR{1380078}

\bibitem{harary53}
Frank Harary, \emph{On the notion of balance of a signed graph}, Michigan Math.
  J. \textbf{2} (1953/54), 143--146. \MR{67468}

\bibitem{horn13}
Roger~A. Horn and Charles~R. Johnson, \emph{Matrix analysis}, second ed.,
  Cambridge University Press, Cambridge, 2013. \MR{2978290}

\bibitem{hou03}
Yaoping Hou, Jiongsheng Li, and Yongliang Pan, \emph{On the {L}aplacian
  eigenvalues of signed graphs}, Linear Multilinear Algebra \textbf{51} (2003),
  no.~1, 21--30. \MR{1950410}

\bibitem{huang15}
Xueyi Huang, Qiongxiang Huang, and Fei Wen, \emph{On the {L}aplacian integral
  tricyclic graphs}, Linear Multilinear Algebra \textbf{63} (2015), no.~7,
  1356--1371. \MR{3299326}

\bibitem{kirkland10}
Stephen Kirkland, Maria Aguieiras~Alvarez de~Freitas, Renata~Raposo
  Del~Vecchio, and Nair Maria~Maia de~Abreu, \emph{Split non-threshold
  {L}aplacian integral graphs}, Linear Multilinear Algebra \textbf{58} (2010),
  no.~1-2, 221--233. \MR{2641534}

\bibitem{kirkland04}
Steve Kirkland, \emph{A characterization of spectral integral variation in two
  places for {L}aplacian matrices}, Linear Multilinear Algebra \textbf{52}
  (2004), no.~2, 79--98. \MR{2033130}

\bibitem{kirkland05}
\bysame, \emph{Completion of {L}aplacian integral graphs via edge addition},
  Discrete Math. \textbf{295} (2005), no.~1-3, 75--90. \MR{2139127}

\bibitem{kirkland08}
\bysame, \emph{{L}aplacian integral graphs with maximum degree 3}, Electron. J.
  Combin. \textbf{15} (2008), no.~1, Research Paper 120, 42. \MR{2443135}

\bibitem{liu10}
Muhuo Liu and Bolian Liu, \emph{Some results on the {L}aplacian spectrum},
  Comput. Math. Appl. \textbf{59} (2010), no.~11, 3612--3616. \MR{2646333}

\bibitem{merris94}
Russell Merris, \emph{Degree maximal graphs are {L}aplacian integral}, Linear
  Algebra Appl. \textbf{199} (1994), 381--389. \MR{1274427}

\bibitem{merris97}
\bysame, \emph{Large families of {L}aplacian isospectral graphs}, Linear and
  Multilinear Algebra \textbf{43} (1997), no.~1-3, 201--205. \MR{1613203}

\bibitem{merris98}
\bysame, \emph{{L}aplacian graph eigenvectors}, Linear Algebra Appl.
  \textbf{278} (1998), no.~1-3, 221--236. \MR{1637359}

\bibitem{mohar91}
Bojan Mohar, \emph{The {L}aplacian spectrum of graphs}, Graph theory,
  combinatorics, and applications. {V}ol. 2 ({K}alamazoo, {MI}, 1988),
  Wiley-Intersci. Publ., Wiley, New York, 1991, pp.~871--898. \MR{1170831}

\bibitem{novanta21Q}
Anderson~Fernandes Novanta, Leonardo de~Lima, and Carla~Silva Oliveira,
  \emph{{$Q$}-integral graphs with at most two vertices of degree greater than
  or equal to three}, Linear Algebra Appl. \textbf{614} (2021), 144--163.
  \MR{4208996}

\bibitem{novanta21L}
Anderson~Fernandes Novanta, Carla~Silva Oliveira, and Leonardo de~Lima,
  \emph{{L}aplacian integral graphs with a given degree sequence constraint},
  Proyecciones \textbf{40} (2021), no.~6, 1431--1448. \MR{4400855}

\bibitem{park19}
Jongyook Park and Yoshio Sano, \emph{On {$Q$}-integral graphs with edge-degrees
  at most six}, Linear Algebra Appl. \textbf{577} (2019), 384--411.
  \MR{3950071}

\bibitem{pokorny13}
Milan Pokorn\'{y}, Pavel H\'{\i}c, and Dragan Stevanovi\'{c}, \emph{Remarks on
  {$Q$}-integral complete multipartite graphs}, Linear Algebra Appl.
  \textbf{439} (2013), no.~7, 2029--2037. \MR{3090452}

\bibitem{simic08}
Slobodan~K. Simi\'{c} and Zoran Stani\'{c}, \emph{{$Q$}-integral graphs with
  edge-degrees at most five}, Discrete Math. \textbf{308} (2008), no.~20,
  4625--4634. \MR{2438168}

\bibitem{simic10}
\bysame, \emph{On {$Q$}-integral {$(3,s)$}-semiregular bipartite graphs}, Appl.
  Anal. Discrete Math. \textbf{4} (2010), no.~1, 167--174. \MR{2654937}

\bibitem{so99}
Wasin So, \emph{Rank one perturbation and its application to the {L}aplacian
  spectrum of a graph}, Linear and Multilinear Algebra \textbf{46} (1999),
  no.~3, 193--198. \MR{1708588}

\bibitem{stanic07}
Zoran Stani\'{c}, \emph{There are exactly 172 connected {$Q$} integral graphs
  up to 10 vertices}, Novi Sad J. Math. \textbf{37} (2007), no.~2, 193--205.
  \MR{2401613}

\bibitem{stanic09}
\bysame, \emph{Some results on {$Q$}-integral graphs}, Ars Combin. \textbf{90}
  (2009), 321--335. \MR{2489535}

\bibitem{wang23}
Dijian Wang and Dongdong Gao, \emph{{L}aplacian integral signed graphs with few
  cycles}, AIMS Math. \textbf{8} (2023), no.~3, 7021--7031. \MR{4550631}

\bibitem{wang21}
Dijian Wang and Yaoping Hou, \emph{{L}aplacian integral subcubic signed
  graphs}, Electron. J. Linear Algebra \textbf{37} (2021), 163--176.
  \MR{4246522}

\bibitem{yizheng02}
Fan Yizheng, \emph{On spectral integral variations of graphs}, Linear
  Multilinear Algebra \textbf{50} (2002), no.~2, 133--142. \MR{1892835}

\bibitem{zhang17}
Jing Zhang, Qiongxiang Huang, Caixia Song, and Xueyi Huang,
  \emph{{$Q$}-integral unicyclic, bicyclic and tricyclic graphs}, Math. Nachr.
  \textbf{290} (2017), no.~5-6, 955--964. \MR{3636391}

\bibitem{zhao13}
Guopeng Zhao, Ligong Wang, and Ke~Li, \emph{Q-integral complete {$r$}-partite
  graphs}, Linear Algebra Appl. \textbf{438} (2013), no.~3, 1067--1077.
  \MR{2997795}

\end{thebibliography}

\providecommand{\bysame}{\leavevmode\hbox to3em{\hrulefill}\thinspace}
\providecommand{\MR}{\relax\ifhmode\unskip\space\fi MR }
\providecommand{\MRhref}[2]{%
  \href{http://www.ams.org/mathscinet-getitem?mr=#1}{#2}
}
\providecommand{\href}[2]{#2}

\end{document}